\newtheorem{thm}{Theorem}[section]
\newtheorem{cor}[thm]{Corollary}
\newtheorem{defn}[thm]{Definition}
\newtheorem{lem}[thm]{Lemma}
\numberwithin{equation}{section}
\newcommand{\bQ}{{\mathbb{Q}}}
\newcommand{\bR}{{\mathbb{R}}}
  \newcommand{\F}{{\mathcal{F}}}
\renewcommand{\L}{{\mathcal{L}}}
  \newcommand{\M}{{\mathcal{M}}}
  \newcommand{\R}{{\mathcal{R}}}
\renewcommand{\S}{{\mathcal{S}}}
  \newcommand{\T}{{\mathcal{T}}}
  \newcommand{\U}{{\mathcal{U}}}
\newcommand{\ul}{\underline  }
\newcommand{\rank}{\operatorname{rank}}
\newcommand{\nulty}{\operatorname{null}}
\begin{document}

%\hfill {\color{blue}  draft, August 21, 2012}

\title[Generically rigid frameworks on surfaces]{A characterisation of generically rigid frameworks on surfaces of revolution}
\author[A. Nixon]{A. Nixon*}
\address{Heilbronn Institute for Math. Research\\ School of Math.\\ University of Bristol\\ Bristol\\
BS8 1TW \\ U.K. }
\email{tony.nixon@bristol.ac.uk}
\thanks{*part of this research was carried out at the Fields Institute, Toronto}
\author[J.C. Owen]{J.C. Owen}
\address{D-Cubed, Siemens PLM Software, Francis House\\
112 Hills Road, Cambridge, CB2 1PH, U.K.}
\email{owen.john.ext@siemens.com}
\author[S.C. Power]{S.C.  Power$^\dagger$}
% \thanks{Third author partially supported by an SERC grant.}
\address{Dept.\ Math.\ Stats.\\ Lancaster University\\
Lancaster LA1 4YF \\U.K. }
\email{s.power@lancaster.ac.uk}
\thanks{$^\dagger$supported by EPSRC grant  EP/J008648/1}

\thanks{2010 {\it  Mathematics Subject Classification.}
52C25, 05B35, 05C10, 53A05 \\
Key words and phrases: bar-joint framework, infinitesimally rigid, framework on a surface}

\begin{abstract}A foundational theorem of
Laman  provides a counting characterisation of the finite simple graphs
whose generic bar-joint frameworks in two dimensions are  infinitesimally rigid.
Recently a Laman-type characterisation was obtained for  frameworks in three dimensions whose vertices are constrained to concentric spheres or to concentric cylinders. Noting that the plane and the sphere have $3$ independent locally tangential infinitesimal motions while the cylinder has $2$, we obtain here
a Laman-type theorem for frameworks on algebraic surfaces with a $1$-dimensional space of tangential motions. Such surfaces include the torus, helicoids and surfaces of revolution. The relevant class of graphs are the $(2,1)$-tight graphs, in contrast to $(2,3)$-tightness for the plane/sphere and $(2,2)$-tightness for the cylinder. The proof uses a new characterisation of simple $(2,1)$-tight graphs and an inductive construction requiring generic rigidity preservation for $5$ graph moves, including the two Henneberg moves, an edge joining move and various vertex surgery moves.
\end{abstract}

\date{}
\maketitle

\section{Introduction}

A bar-joint framework in real Euclidean space $\bR^d$ is a geometric realisation of
the vertices of a graph with the edges considered as inextensible bars between them.
Such a framework is said to be rigid if there is no non-trivial continuous motion of the framework vertices
which maintains bar-lengths, and is said to be flexible if it is not rigid.
A foundational theorem of
Laman, obtained in 1970,  asserts that the rigidity of a generically positioned framework in the plane depends only on the underlying
graph and furthermore these graphs are characterised in terms of a simple counting condition.  There is also an elegant recursive construction of the
minimally rigid frameworks
going back to Henneberg \cite{Hen,Lam} in which each framework may be derived
from a single edge framework by repeated application of two simple construction moves,
namely the Henneberg 1 move and the Henneberg 2 move.
Analogous characterisations for frameworks in $\bR^3$ remain open problems  and
no combinatorial characterisation of generic rigidity is known.
We note however that a number of partial and related results are given in
Whiteley \cite{whi-book} and that
the longstanding molecular conjecture has been resolved by Katoh and Tanigawa \cite{K&T}.

Attention has also been given to frameworks in $3$-dimensional space whose vertices are constrained to  $2$-dimensional surfaces. In \cite{NOP} we obtained Laman-Henneberg-type theorems for the case of constraint to parallel planes,
concentric spheres and concentric cylinders.
The cylinder case presents  new complications both for the purely graph theoretical analysis and for the preservation of rigidity under the Henneberg moves and the further construction moves that are needed.
%, namely the vertex-to-$K_4$ move and the vertex-splitting move.

A fixed plane surface in $\bR^3$ supports a three-dimensional vector space of {internal} infinitesimal motions, coming from translations and rotations, while the (infinite circular) cylinder has two such independent motions.
More formally, in Definition \ref{d:surfacetype} we define the type $k$
of an irreducible algebraic surface where $k$ takes values
$3,2,1$ or $0$. (See also Definition  \ref{d:surfacetype2}.)
The type of a surface is reflected in the
graph counting conditions for Laman type theorems; fewer independent infinitesimal motions for the surface
imply a richer set of graphs which in turn require more constructive moves and more refined
proof techniques. 

The main result in the present paper is the following theorem for
bar-joint frameworks in  $\bR^3$ whose vertices are constrained to
an algebraic surface $\M$ of type $1$. 
These surfaces include a range of fundamental algebraic surfaces such as the elliptical cylinder, the cone, the torus,  surfaces of revolution, and various helical glide-translation surfaces.

\begin{thm}\label{conetorustheorem}
Let $G=(V,E)$ be a simple graph, let $\M$ be an irreducible algebraic surface in $\bR^3$ of type $1$ and let $(G,p)$ be a generic framework on $\M$.
Then $(G,p)$ is isostatic on
$\M$ if and only if
%$|E|=2|V|-1$ and for every subgraph $H=(V',E')$, $|E'|\leq 2|V'|-1$.
$G$ is $K_1, K_2, K_3, K_4$ or is $(2,1)$-tight.
\end{thm}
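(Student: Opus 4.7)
The plan is to prove both directions of the equivalence. The necessity direction follows from a Maxwell-type count on the surface-constrained rigidity matrix, while sufficiency is established by an inductive construction using the five moves listed in the abstract.

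For necessity, suppose $(G,p)$ is isostatic on $\M$. The relevant rigidity matrix is the $(|E|+|V|)\times 3|V|$ matrix whose rows are the $|E|$ bar-constraint rows together with the $|V|$ surface-constraint rows (gradients of a defining polynomial for $\M$ evaluated at each $p_i$). For $p$ generic, the space of trivial infinitesimal motions has dimension equal to the type of $\M$, which is $1$. Isostaticity then forces both full row-rank and nullity equal to $1$, giving $|E|+|V|=3|V|-1$, i.e.\ $|E|=2|V|-1$. Restricting this argument to the subframework induced on any $V'\subseteq V$ preserves independence and yields $|E'|\le 2|V'|-1$ on the induced edge set. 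These are precisely the $(2,1)$-tight conditions, unless $|V|$ is too small for the equality to be attained, in which case direct inspection reduces to $G\in\{K_1,K_2,K_3,K_4\}$.

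For sufficiency the proof splits into a combinatorial step and a geometric step. Combinatorially, I would show that every simple $(2,1)$-tight graph above a bounded size admits the inverse of at least one of the five construction moves --- the two Henneberg moves, an edge-joining move, and two vertex-surgery moves --- to another simple $(2,1)$-tight graph. Iterating this reduction terminates at one of the listed small base cases. The geometric step then verifies that each of the five moves, applied to a generic isostatic framework on $\M$, produces a generic isostatic framework on $\M$, and that the base graphs are themselves generically isostatic on $\M$ (the latter reducing to direct calculations using the type $1$ hypothesis on $\M$).

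The main obstacle is the geometric preservation result for the surgery and edge-joining moves. The Henneberg $1$ move is straightforward: appending a vertex joined by two non-parallel edges adds two independent rows to the rigidity matrix for generic placements on $\M$. The Henneberg $2$ move is handled by the standard dimension-count technique: the Zariski-closed locus of placements for the new degree-$3$ vertex which destroy isostaticity has positive codimension in $\M$, so a generic vertex works. The vertex-surgery and edge-joining moves are more delicate; they glue or detach subframeworks along prescribed vertices, and preservation of isostaticity under them requires producing an explicit non-degenerate witness framework on $\M$ whose rigidity matrix is then verified to have the correct rank by a determinantal computation exploiting the $1$-dimensional isometry group and the specific geometry of type $1$ surfaces. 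This analysis is substantially harder than the type $2$ case treated in \cite{NOP} for the cylinder, and is where the bulk of the proof effort is expected to concentrate.
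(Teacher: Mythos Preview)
Your overall architecture is the paper's: necessity via a Maxwell-type count (the paper imports this from \cite{NOP} as Theorem~\ref{necessity}), and sufficiency via the inductive characterisation of simple $(2,1)$-tight graphs (Theorem~\ref{t:21characterisation}) together with move-preservation lemmas. So at the strategic level you are on the right track.

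However, your assessment of where the technical weight lies is essentially inverted, and two of your proposed techniques would not go through as stated.

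\textbf{Henneberg 2 is not ``standard''.} Your dimension-count sketch amounts to: the bad locus of placements for the new vertex is Zariski-closed, hence proper, hence avoided generically. But to show it is \emph{proper} you must exhibit one good placement, and this is exactly the difficulty. The classical trick---place the new vertex on the line through the deleted edge so that the induced dependency cancels---fails on a surface: only finitely many surface points are collinear with $p_1,p_2$, and (as the paper explains at the start of Section~\ref{s:hennebergmoves}) the dependencies they create are not of the right form. The paper gives two genuinely new arguments for this step: a sequential-specialisation limit argument (Lemma~\ref{hen2independent}) and a direct algebraic proof via ideal membership over $\bQ(p)$ (Section~\ref{sec:algh2}). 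Your sketch contains neither idea.

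\textbf{Vertex-to-$K_4$ is not handled by a witness framework.} The paper does \emph{not} produce an explicit non-degenerate specialisation and compute a determinant. Instead it runs a well-behaved contraction $(G',p^k)\to(G',p^\infty)$ in which the $K_4$ subframework shrinks to a point along directions controlled by the principal-curvature frame at $p_1$, and shows that any proper flex survives the limit (Lemmas~\ref{l:justK4} and~\ref{p:k4onM}). The key geometric input---that the principal curvatures at a generic point of a type~$1$ surface are distinct---is what makes the $3\times 3$ coefficient matrix in that argument invertible. Your ``determinantal computation exploiting the $1$-dimensional isometry group'' does not capture this.

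\textbf{Edge joining is easy, not hard.} It is a short block-matrix argument (Lemma~\ref{edgejoin}): the single joining edge kills the one residual rigid-motion freedom of the second block.

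\textbf{Base case.} The inductive construction starts from $K_5\backslash e$, not from the small complete graphs $K_1,\dots,K_4$; those are not $(2,1)$-tight and appear as separate exceptional cases in the statement.
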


For a plane surface the required graphs are the Laman graphs, corresponding to
the top count $|E|=2|V|-3$
and the inequality $|E'|\leq2|V'|-3$ for every subgraph $(V', E')$.
In fact these are the $(2,3)$-tight graphs, in the sense of Definition \ref{kltight} and are necessarily simple. Observe that these graphs are necessarily simple. For the cylinder the appropriate graphs are the simple $(2,2)$-tight graphs. In this paper the key class of graphs are the simple $(2,1)$-tight graphs which were characterised recently in
Nixon and Owen \cite{N&O}. We shall obtain here the following alternative characterisation which turns out to be efficient for our purposes. The methods for this also lead to a new analogous characterisation of simple $(2,2)$-tight graphs given in Section \ref{graphtheory}.

\begin{thm}\label{t:21characterisation} 
A simple finite graph is $(2,1)$-tight if and only if it is equal to $K_5\backslash e$ or can be obtained from this graph by the sequential application of moves of 5 types, namely the Henneberg $1$ and $2$ moves, the vertex-to-$K_4$ move, the vertex-to-$4$-cycle move, and the edge joining move.
\end{thm}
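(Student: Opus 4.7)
The plan is to establish the equivalence by two independent arguments: sufficiency (each of the five moves applied to a simple $(2,1)$-tight graph yields another simple $(2,1)$-tight graph) and necessity (every simple $(2,1)$-tight graph other than $K_5\backslash e$ admits an inverse move to a strictly smaller simple $(2,1)$-tight graph).

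For sufficiency I would first check directly that $K_5\backslash e$ is simple and $(2,1)$-tight: it has $9 = 2\cdot 5 - 1$ edges, and a brief case analysis on subgraph orders confirms $|E'|\leq 2|V'|-1$ throughout. For each of the five moves I would then verify (i) the global count is preserved, equivalently $\Delta|E| = 2\Delta|V|$, and (ii) no subgraph of the enlarged graph exceeds the sparsity bound. These are local checks: the Henneberg moves are standard, and for the remaining three moves one runs the usual submodular argument showing that any hypothetical tight subgraph in the enlarged graph would project to a forbidden tight subgraph in the original.

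For necessity I would induct on $|V|$. The base case $|V|=5$ is forced: a simple graph on $5$ vertices with $9$ edges is $K_5\backslash e$. For the inductive step with $|V|\geq 6$, the degree-sum identity $\sum_v\deg(v) = 4|V|-2$ gives average degree strictly less than $4$, so some vertex $v$ has $\deg(v)\in\{2,3\}$. If $\deg(v)=2$, then $G-v$ is simple $(2,1)$-tight by inheritance and we obtain an inverse Henneberg $1$ move.

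The substantive case is when every minimum-degree vertex has degree exactly $3$. Pick such a $v$ with neighbourhood $\{a,b,c\}$. If $\{v,a,b,c\}$ induces a $K_4$ in $G$, attempt the inverse vertex-to-$K_4$ move, contracting this $K_4$ to a single vertex. Otherwise attempt an inverse Henneberg $2$ move by deleting $v$ and inserting a non-edge $xy\in\{ab,ac,bc\}\setminus E$; this succeeds unless every candidate is blocked by a \emph{tight} subgraph $H_{xy}\subseteq G-v$, i.e.\ one with $|E(H_{xy})|=2|V(H_{xy})|-1$, containing both $x$ and $y$. When all candidates are blocked, apply the submodular inequality $f(A\cup B) + f(A\cap B) \leq f(A) + f(B)$, with $f(X):=2|V(X)|-|E(X)|$, repeatedly to unions and intersections of the blocking subgraphs; this should force the structure around $v$ into one of three canonical obstruction patterns handled respectively by the inverse vertex-to-$K_4$, inverse vertex-to-$4$-cycle, or inverse edge joining move (the last arising when the blocking subgraphs reveal a $2$-vertex cut $\{x,y\}$ with $xy\in E$ that separates $G$ into two pieces each inheriting $(2,1)$-tightness). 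The principal obstacle is precisely this enumeration of obstruction patterns and the verification that each admits a valid inverse move producing a strictly smaller simple $(2,1)$-tight graph; the necessity of the full list of five moves emerges here, since each obstruction type is dismantled by a specific inverse operation. In practice one may also need to vary the choice of $v$ among the degree-$3$ vertices before locating one at which an inverse move succeeds, and the submodular bookkeeping developed in \cite{N&O} provides the template for this analysis.
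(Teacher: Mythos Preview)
Your overall architecture matches the paper's: induct on $|V|$, peel off degree-$2$ vertices by inverse Henneberg~$1$, and when the minimum degree is $3$ show that some inverse move applies. The paper also uses the fact (from \cite{N&O}) that a degree-$3$ vertex not lying in a $K_4$ always admits an admissible inverse Henneberg~$2$ move, so the substantive analysis begins only once every degree-$3$ vertex sits inside a $K_4$; your submodular blocking-subgraph idea is exactly the mechanism behind that reduction.

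Where your proposal has a genuine gap is the obstruction analysis beyond that point, and in particular your account of the inverse edge-joining case is incorrect. The edge-joining move produces a \emph{bridge} (a cut edge), not a $2$-vertex cut; its inverse locates a cut edge whose removal leaves two $(2,1)$-tight components. More importantly, the route by which the paper reaches this case is quite specific and is not a local submodular argument on Henneberg-$2$ blockers. When the $K_4$ containing a degree-$3$ vertex $v$ has no admissible $K_4$-to-vertex contraction, the paper shows there must be a vertex $w$ outside the $K_4$ with two edges into it, say to $a,b$; writing $c$ for the fourth $K_4$ vertex, either $wc\notin E$, in which case an explicit three-case check on subgraphs $Y$ with $f(Y)=1$ shows the $4$-cycle contraction $v\to w$ is admissible, or $wc\in E$ and $\{v,w,a,b,c\}$ spans a copy of $K_5\backslash e$. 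Assuming the latter holds for \emph{every} degree-$3$ vertex, the paper then proves these $K_5\backslash e$ copies are pairwise vertex-disjoint (via $f(Y_i\cup Y_j)=2-f(Y_i\cap Y_j)$) and uses a global incidence count over all such copies to force one of them to have exactly one edge to the rest of $G$, yielding the inverse edge join. Your sketch neither identifies this $K_5\backslash e$ structure nor the disjointness-plus-counting step, and without them the reduction does not close.
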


The proof of the main theorem is principally concerned with the sufficiency for rigidity of the combinatorial condition
and we obtain  this by showing that each of the moves in the constructive sequence for the graph
preserves generic rigidity. We introduce some new methods for this and
there are two moves that present particular challenges, namely the Henneberg $2$ move and the vertex-to-$K_4$ move. 
For the Henneberg $2$ move we give two quite different proofs which also give new proofs in the case of the circular cylinder.
The first of these uses a convergence argument involving a sequence of 
generic realisations of $G'$ 
which converge to a degenerate non-generic realisation of $G'$ which in a natural sense covers
a generic realisation of $G$. 
On the other hand in Section \ref{sec:algh2} we adopt the traditional approach
of algebraic specialisation to obtain a direct entirely algebraic proof. 

For the  vertex-to-$K_4$ move we show that proper flexes are inherited under the inverse move $G'\to G$ corresponding to $K_4$ contraction to a vertex. This is achieved through the
consideration of a sequence $(G', p^k)$ in which the $K_4$-subframeworks contract in a manner
which is \textit{well-behaved} with respect to the distinct principal curvatures of the surface.

In all our considerations  the framework vertices are constrained to a surface while the edges are straight Euclidean edges measured by distances in $\bR^3$. We note that Whiteley \cite{whi-union} has considered the topic of frameworks
on surfaces with geodesic edges and there the appropriate combinatorial objects are looped multigraphs. Moreover,  rigidity is expressed in terms
of the rank of the $k$-frame matrix rather than the rigidity matrix.
We note also that  frameworks on surfaces are also implicit in the context of periodic frameworks, considered, for example, by Borcea and Streinu \cite{bor-str}, Malestein and Theran \cite{M&T,M&Tcone}, Nixon and Ross \cite{nix-rosperiodic}, Owen and Power \cite{O&P}  and Ross \cite{Ross2}.

The paper is structured as follows. In Section \ref{surfaceintro} we recall  basic definitions and key results from \cite{NOP} for generic frameworks on surfaces.
In Section \ref{graphtheory} we detail the inductive moves on graphs and obtain inductive characterisations
of simple $(2,k)$-tight graphs for $k = 1$ and $2$.
In Section \ref{s:hennebergmoves} we prove the preservation of generic independence for the two Henneberg moves as moves on frameworks on algebraic surfaces. In Section  \ref{vertexmoves} we prove the preservation of generic independence for
various vertex surgery moves, including vertex splitting, vertex-to-$4$-cycle and vertex-to-$K_4$. In Section \ref{sec:algh2} we provide an alternative proof of generic independence under Henneberg 2 moves that we believe could be of independent interest.
In Section \ref{theoremsec} we prove the main theorem and in the final section we discuss the difficulty of extending our results to other surfaces and comment on higher dimensional contexts.

%Also, as an application we characterise generically isostatic frameworks
%which are supported on some two-dimensional surfaces in  four dimensional %Euclidean space, namely
%the Clifford torus $S^1 \times S^1$ in $\bR^2 \times \bR^2$ and its deformed %variant
%$S^1 \times E^1$, with $E^1$ a proper ellipse.

\section{Frameworks on Surfaces}
\label{surfaceintro}

Let $\M$ be a subset of  $\bR^3$ which is a smooth surface in the sense of being a $2$-dimensional embedded differentiable manifold.
The main examples we have in mind are defined as disjoint unions of
parts of elementary algebraic surfaces. Accordingly we assume  smoothness in the sense that local coordinate maps exist for
$\M$ which are analytic. In particular for every point  of $\M$ there is a continuous
choice of normal vectors in a neighbourhood of the point and a Taylor series expansion, as in Equation \ref{taylor}, for points of $\M$ in this neighbourhood.

A framework $(G,p)$ on $\M$ is a finite bar-joint framework in $\bR^3$, for a
simple graph $G=(V,E)$, with framework points $p(v), v\in V$, which lie on $\M$.

An \textit{infinitesimal flex} of $(G, p)$ on $\M$ is a sequence or vector $u$ of velocity vectors $u_1, \dots , u_{|V|}$,
considered as acting at the framework points, which  are tangential to the surface and
satisfy the infinitesimal flex requirement in $\bR^3$, namely
\[
 u_i.(p_i-p_j) = u_j.(p_i-p_j),
\]
for each edge $v_iv_j$.
It is elementary to show that $u$ is an infinitesimal flex if and only if $u$ lies in the
nullspace (kernel) of the rigidity matrix $R_\M(G,p)$ given in the following definition.
The submatrix of $R_{\M}(G,p)$ given by the first
$|E|$ rows provides the usual rigidity matrix, $R_3(G,p)$ say, for the unrestricted framework $(G,p)$.
The tangentiality condition corresponds to
$u$ lying in the nullspace of the matrix formed by the last $|V|$ rows.

\begin{defn}\label{rigiditymatrixdef}
The
\emph{rigidity matrix} $R_\M(G,p)$ of $(G,p)$ on $\M$ is an $|E|+|V|$ by $3|V|$ matrix in which consecutive triples of columns  correspond to
framework points. The first $|E|$ rows correspond to the edges and the entries in row $e=uv$ are zero except possibly in the column triples for
$p(u)$ and $p(v)$, where the entries are the coordinates of $p(u)-p(v)$ and $p(v)-p(u)$ respectively. The final $|V|$ rows correspond to the vertices and the entries
in the row for vertex $v$ are zero except in the columns for $v$ where the entries are the coordinates of a normal vector $N(p(v))$  to $\M$
at $p(v)$.
\end{defn}

The case of a surface $\M$ which is a subset of the nonsingular points of  a
polynomial equation $m(x,y,z)=0$  is of particular interest, especially when $m(x,y,z)$ is irreducible over some coefficient field. In what follows we confine attention
to the rational field and refer to such surfaces simply as \emph{irreducible surfaces}.
In this case we may take the derivative of $m(x,y,z)$ at $p(v)$ for the choice of normal $N(p(v))$. Furthermore, the rigidity matrix arises from the derivative of the augmented edge-function $\tilde{f}_G$, with
\[ 2R_{\M}(G,p) = (D\tilde{f}_G)(p), \]
where $\tilde{f}_G:\bR^{3|V|} \to \bR^{|E|+|V|}$ is given by $\tilde{f}_G(q) = (f_G(q), m(q_1), \dots , m(q_{|V|}))$
with
\[
f_G(q) =(\|q_i-q_j\|^2)_{v_iv_j\in E}
\]
the usual edge function for $G$ associated with
framework realisations in $\bR^{3n}$, where $\|.\|$ is the usual Euclidean norm.

As is well-known, for $n\geq 4$ a complete graph framework $(K_n,p)$ in $\bR^3$, not lying in a hyperplane, has a $6$-dimensional vector space of infinitesimal flexes,
%$u=(u_1,u_2, u_3), u_i \in \bR^3$.
a basis for which may be provided by a set of linearly independent infinitesimal flexes associated with translations  and rotations. When  the vertices of $(K_n,p)$ are constrained to a surface $\M$ then the dimension is reduced to $\dim \ker R_{\M}(K_n,p) = k$ where $k=3,2,1$ or $0$.
%We refer to $k$ as the \textit{number of trivial motions of $\M$ for $p$}.

We now define smooth surfaces of type $k$ for $k=3,2,1, 0$. The type number reflects the number of independent
infinitesimal motions of a typical framework on $\M$ that arise from isometries of $\bR^3$
that act tangentially at every point on $\M$ (not just the framework joints). For the sphere, cylinder and  cone the types are  $3, 2$ and $1$ respectively, while
the ellipsoid, defined by $x^2+2y^2+3z^2=1$, has type $0$.

\begin{defn}\label{d:surfacetype}
A surface $\M$ is said to be of type $k$, or to have freedom number $k$, if $\dim \ker R_{\M}(K_n,p)\geq k$ for all complete graph frameworks $(K_n,p)$ on $\M$ and $k$ is the largest such number.
\end{defn}

Apart from the type $3$ surfaces, which arise from concentric spheres or parallel planes,  a typical $K_4$ framework on a surface $\M$ has a two-dimensional space of infinitesimal flexes. This follows on consideration
of the $10$ by $12$ rigidity matrix.
For $K_3$ and $K_2$ frameworks the space is  three-dimensional and includes rotational flexes not derivable from (tangentially acting) isometries.
For the cylinder the flexes of $K_4$ frameworks are all associated with isometries whereas on the cone (resp. ellipsoid) there is a one-dimensional (resp. zero-dimensional) subspace determined by tangential isometries.

\begin{defn}\label{d:infrigidity} 
Let $\M$ be a smooth surface and $p=(p_1,\dots ,p_n)$  a vector
of points on $\M$. Then the framework $(G,p)$ on $\M$ is said to be infinitesimally rigid
if every infinitesimal flex of $(G,p)$ on $\M$ corresponds to a rigid motion flex of $\M$.
%In particular if
%$\dim \ker R_{\M}(K_{n},p)$ agrees with the freedom number $k$ of $\M$ then
%$(G,p)$   is infinitesimally rigid on $\M$ if and only if
%\[
%\dim \ker R_{\M}(G,p) = k.
%\]
\end{defn}

A framework $(G,p)$ on $\M$ is \emph{independent} if $R_\M(G,p)$ has linearly independent rows and is
\emph{minimally infinitesimally  rigid on $\M$}, or \emph{isostatic on $\M$} if it is independent and infinitesimally rigid on $\M$.

From the point of view of the infinitesimal rigidity  it is only the nature of $\M^{|V|}$ in a neighbourhood of $p$ which is of significance. On the other hand for irreducible surfaces one can establish generic properties for the pair $G, \M$ as we shall see.

Following Asimow and Roth we say that a framework $(G,p)$ on $\M$ is \emph{regular} if the rank of $R_\M(G,q)$ takes its maximum value throughout a neighbourhood of $p$ in $\M^{|V|}$.
In the case that $\M$ is an algebraic surface determined by an irreducible polynomial $m(x,y,z)$ over $\bQ$, the framework $(G, p)$ is
said to be \emph{generic} on $\M$ 
if an algebraic dependency $h(\{x_i\},\{y_i\},\{z_i\})=0$ holds between the coordinates $x_i, y_i, z_i$ of all the
points $p_i$ only when the polynomial $h(\{X_i\},\{Y_i\},\{Z_i\})$ lies in the ideal generated by the polynomials $m(X_i,Y_i,Z_i), 1 \leq i\leq |V|$. It is a standard exercise to show that such generic frameworks are regular.

Note that in contrast to type $3$ and $2$ there are diverse classical surfaces of type $1$,
including spheroids, with isometry group $S^1$, elliptical cylinders  and other noncircular cylinders,
with translational isometry group $\bR^1$, and circular hyperboloids and other diverse surfaces
with glide-rotation isometry group $\bR^1$.

\begin{defn}
A simple graph $G$ is independent for the irreducible surface $\M$ if every generic framework
$(G, p)$ on $\M$ is independent.
\end{defn}

In particular, $K_4$ is dependent for the sphere but independent for the cylinder. On the other hand
$K_5\backslash e$ is dependent for the cylinder but independent for the cone. Note that $K_n \backslash e$ denotes the unique graph formed by deleting any single
edge from $K_n$. 

The determination of combinatorial conditions for the generic independence of classes of frameworks is one of the fundamental problems in constraint system rigidity theory.
See for example  Whiteley \cite{whi-vertexsplit,whi-book} and Jackson and Jordan \cite{J&J}.
Our main result can be viewed in this spirit. Also we note that there is the following matroidal interpretation of our main result. Let
$\L(K_n,\M)$
be the linear matroid for the rigidity matrix
$R_\M(K_n,p)$  associated with a generic $n$-tuple and the irreducible surface $\M$.
Then by Theorem \ref{conetorustheorem} the bases of $\L(K_n,\M)$ correspond to sets of rows determined by the $(2,1)$-tight subgraphs.

The notions of continuous rigidity and minimal continuous rigidity are also naturally defined in the surface setting and the following equivalence is an analogue of a theorem of Gluck \cite{glu}.

\begin{thm}\cite{NOP}
A generic framework $(G,p)$ on an algebraic surface $\M$ is infinitesimally rigid if and
only if it is continuously rigid on $\M$.
\end{thm}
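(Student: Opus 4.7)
The plan is to adapt the classical Asimow--Roth and Gluck arguments to the surface setting by working with the augmented edge function $\tilde{f}_G$ introduced above. The central object is the local solution set $C_p = \tilde{f}_G^{-1}(\tilde{f}_G(p))$, consisting of all configurations $q \in \bR^{3|V|}$ whose vertices lie on $\M$ and whose edge lengths coincide with those of $p$. Its linearisation at $p$ is the kernel of $D\tilde{f}_G(p) = 2R_\M(G,p)$, which is precisely the space of infinitesimal flexes of $(G,p)$ on $\M$.

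First I would invoke genericity to ensure that $p$ is a regular point of $\tilde{f}_G$, i.e.\ the rank of $R_\M(G,q)$ is constant on a neighbourhood of $p$. As noted in the preceding discussion, the locus of $q \in \M^{|V|}$ where the rank falls below its maximum value is the vanishing locus of a collection of minor polynomials, and is therefore a proper algebraic subvariety of $\M^{|V|}$ which is avoided by every generic $p$. By the constant rank theorem applied to $\tilde{f}_G$, $C_p$ is then locally a smooth submanifold of $\bR^{3|V|}$ of dimension $3|V| - \rank R_\M(G,p) = \dim \ker R_\M(G,p)$, with tangent space at $p$ equal to that kernel.

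Next, let $\Gamma_\M$ denote the Lie group of ambient Euclidean isometries of $\bR^3$ that preserve $\M$. Its orbit $\Gamma_\M \cdot p$ is a smooth submanifold of $C_p$, and its tangent space at $p$ is exactly the subspace of $\ker R_\M(G,p)$ coming from the tangential infinitesimal motions induced by $\Gamma_\M$. By Definition \ref{d:infrigidity}, $(G,p)$ is infinitesimally rigid precisely when these two tangent spaces coincide, while continuous rigidity on $\M$ amounts to the local equality $C_p = \Gamma_\M \cdot p$ as germs at $p$.

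The equivalence now follows from a dimension-matching argument in both directions. If $(G,p)$ is infinitesimally rigid, then $C_p$ and $\Gamma_\M \cdot p$ are smooth manifolds at $p$ of the same dimension with $\Gamma_\M \cdot p \subseteq C_p$, so they coincide locally, yielding continuous rigidity. Conversely, if $(G,p)$ is continuously rigid, then $C_p$ and $\Gamma_\M \cdot p$ agree as germs at $p$, so their tangent spaces agree, giving infinitesimal rigidity. The main technical hurdle is the genericity-implies-regularity step: one must carefully verify, via a Sard-type argument leveraging the irreducibility of $m(x,y,z)$ over $\bQ$, that non-regular configurations really do form a proper subvariety of $\M^{|V|}$, so that the constant-rank theorem can be invoked at a generic $p$. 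Once that foothold is secured, the remainder of the proof is a straightforward smooth-manifold calculation.
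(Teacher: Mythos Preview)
The paper does not supply its own proof of this theorem: it is quoted verbatim from \cite{NOP} and no argument is given here. There is therefore nothing in the present paper to compare your proposal against.

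That said, your outline is the standard Asimow--Roth/Gluck adaptation and is essentially what one expects the proof in \cite{NOP} to be: use genericity to secure constant rank of $R_\M(G,\cdot)$ near $p$, apply the constant-rank theorem to $\tilde{f}_G$ so that the local constraint variety $C_p$ is a smooth manifold with tangent space $\ker R_\M(G,p)$, and then compare dimensions with the orbit of the isometry group of $\M$. One point worth tightening is the identification of the ``rigid motion flexes of $\M$'' in Definition~\ref{d:infrigidity} with the tangent space to $\Gamma_\M\cdot p$: you should note that for generic $p$ with sufficiently many vertices the stabiliser in $\Gamma_\M$ is trivial, so the orbit has the full dimension $\dim\Gamma_\M$, and that this dimension coincides with the type $k$ of $\M$. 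With that in hand your dimension-matching argument goes through in both directions, using that $C_p$, being a manifold, is locally path-connected so that continuous rigidity genuinely forces the germ equality $C_p=\Gamma_\M\cdot p$.
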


In this paper the infinitesimal rigidity perspective will be more direct
and we make use of the following two results from \cite{NOP}, namely a version of the Maxwell counting condition
and an isostatic characterisation in the spirit of  Asimow and Roth \cite{A&R}.

\begin{thm}\label{necessity}\cite{NOP}
Let $(G, p)$ be an isostatic generic framework on the algebraic surface $\M$  of type $k, 0\leq k\leq 3$,
with $G$ not equal to $K_1, K_2, K_3$ or $K_4$.
Then $|E|=2|V|-k$ and for every subgraph $H$ of $G$ with at least one edge,
$|E(H)|\leq 2|V(H)|-k$.
\end{thm}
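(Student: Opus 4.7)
The plan is to read both conclusions directly from the dimensions, rank, and nullity of the rigidity matrix $R := R_\M(G,p)$, which has size $(|E|+|V|) \times 3|V|$. The isostatic hypothesis decomposes into two statements about $R$: independence asserts that the rows of $R$ are linearly independent, so $\rank R = |E|+|V|$, while infinitesimal rigidity on $\M$ asserts that $\ker R$ consists precisely of the infinitesimal flexes at $p$ arising from tangential rigid motions of $\M$.

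For the top count $|E|=2|V|-k$, the key step is to establish $\dim \ker R = k$. One inclusion is automatic from Definition \ref{d:surfacetype}: the type $k$ provides a $k$-dimensional family of Euclidean isometries acting tangentially at every point of $\M$, and these always restrict to infinitesimal flexes of $(G,p)$ lying in $\ker R$, so $\dim \ker R \geq k$. For the reverse inequality one invokes the exclusion $G \neq K_1, K_2, K_3, K_4$: as noted in the discussion surrounding Definition \ref{d:infrigidity}, it is precisely these small complete graph frameworks on $\M$ that can possess extra tangential non-isometric flexes (additional rotational flexes for $K_2, K_3$, and an extra flex on $K_4$ over the cone and similar type 1 surfaces); for any other generic isostatic $(G,p)$ the tangential rigid motions act linearly independently on the framework points. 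Rank--nullity then gives $|E|+|V| = 3|V|-k$, which is the top count.

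For the hereditary inequality, let $H=(V',E')$ be a subgraph with $|E'|\geq 1$. After reindexing, the matrix $R_\M(H, p|_{V'})$ is the submatrix of $R$ obtained by selecting the $|E'|$ edge rows indexed by $E'$, the $|V'|$ tangency rows indexed by $V'$, and the $3|V'|$ columns indexed by $V'$. Since the rows of $R$ are linearly independent, so are those of this submatrix, giving $\rank R_\M(H, p|_{V'}) = |E'|+|V'|$. On the other hand, the rows of $R_\M(H, p|_{V'})$ are a subset of the rows of $R_\M(K_{|V'|}, p|_{V'})$, so $\ker R_\M(K_{|V'|}, p|_{V'}) \subseteq \ker R_\M(H, p|_{V'})$, and by Definition \ref{d:surfacetype} the left-hand side has dimension at least $k$. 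Rank--nullity applied to $R_\M(H, p|_{V'})$ then yields $2|V'|-|E'| = \dim\ker R_\M(H, p|_{V'}) \geq k$, which is the inequality $|E'| \leq 2|V'|-k$.

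The one delicate point is the upper bound $\dim \ker R \leq k$ used in the top count: one has to show that the natural map from the $k$-dimensional space of tangential isometric motions of $\M$ into the space of flex vectors at $p$ is injective for our generic $(G,p)$. This is where the exclusion of the four small complete graphs enters, and is the only place where the proof uses more than routine rank--nullity bookkeeping. The subgraph inequality, by contrast, needs only the lower bound on the nullspace supplied by the definition of type, since adding the missing edges of $K_{|V'|}$ can only shrink the nullspace further.
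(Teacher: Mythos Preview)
This theorem is not proved in the present paper; it is quoted from \cite{NOP} as background and used as a black box. So there is no in-paper argument to compare against, and your sketch must be assessed on its own merits.

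The rank--nullity bookkeeping is sound, and the subgraph inequality is handled cleanly. One point is worth making explicit there: the rows of $R$ indexed by $E'$ and by $V'$ have support only in the columns for $V'$, which is precisely why row independence survives the passage to the submatrix $R_\M(H,p|_{V'})$; this is implicit in your phrasing but deserves a sentence.

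In your final paragraph the direction is reversed. Injectivity of the map from the $k$-dimensional space of tangential isometric motions into the flex space gives $\dim(\text{image})=k$ with $\text{image}\subseteq\ker R$: that is the \emph{lower} bound $\dim\ker R\geq k$, not the upper. (And in fact the lower bound follows more directly from Definition~\ref{d:surfacetype} via the containment $\ker R_\M(G,p)\supseteq\ker R_\M(K_{|V|},p)$, with no need for the isometry interpretation.) The \emph{upper} bound $\dim\ker R\leq k$ comes from infinitesimal rigidity, which forces $\ker R=\R_p$, together with the fact that $\R_p$ is the image of a map whose domain has dimension at most $k$; injectivity plays no role there. What the exclusion of $K_1,\dots,K_4$ actually buys is that $|V|\geq 5\geq 6-k$, so the paper's identification $\R_p=\ker R_\M(K_{|V|},p)$ applies and one can invoke the maximality clause in Definition~\ref{d:surfacetype} to pin $\dim\R_p$ down to exactly $k$ at a generic $p$. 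Your instinct that the small-graph exclusion is the crux is right; only the mechanism is mislabelled.
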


\begin{thm}\label{isostaticnecessity}\cite{NOP}
%[Nixon, Owen and Power \cite{NOP}]
Let $(G,p)$ be a generic framework on a surface $\M$ of type $k$. Then $(G,p)$ is
isostatic on $\M$ if and only if
\begin{enumerate}
\item $\rank R_{\M}(G,p) = 3|V|-k$ and
\item $ 2|V|-|E| =k.$
\end{enumerate}
\end{thm}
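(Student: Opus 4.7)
The plan is to translate each of the two conditions defining isostaticity, namely independence of the rows of $R_\M(G,p)$ and infinitesimal rigidity on $\M$, into a rank condition on the $(|E|+|V|)\times 3|V|$ matrix $R_\M(G,p)$, and then combine them.

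First, $(G,p)$ is independent on $\M$ precisely when all $|E|+|V|$ rows of $R_\M(G,p)$ are linearly independent, i.e.\ $\rank R_\M(G,p)=|E|+|V|$. This is immediate from the definition.

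Next, I would show that infinitesimal rigidity of $(G,p)$ on $\M$ is equivalent to $\rank R_\M(G,p)=3|V|-k$. The space of infinitesimal flexes of $(G,p)$ on $\M$ is $\ker R_\M(G,p)$. Let $T\subseteq \ker R_\M(G,p)$ be the subspace of flexes induced by rigid motions of $\M$. By Definition \ref{d:surfacetype}, $T$ is the image of the $k$-dimensional Lie algebra of tangentially-acting infinitesimal isometries of $\M$, so $\dim T\le k$. The key step is to verify that for generic $p$ equality holds: this is so because $k$ linearly independent analytic vector fields on the irreducible surface $\M$ cannot simultaneously vanish at a generic tuple $(p_1,\dots,p_{|V|})$, so the evaluation map from the isometry algebra into $T$ is injective. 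Consequently, by Definition \ref{d:infrigidity}, $(G,p)$ is infinitesimally rigid on $\M$ iff $\ker R_\M(G,p)=T$, iff $\dim\ker R_\M(G,p)=k$, iff $\rank R_\M(G,p)=3|V|-k$.

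Combining the two equivalences, $(G,p)$ is isostatic on $\M$ if and only if
\[
\rank R_\M(G,p)=|E|+|V|\quad\text{and}\quad \rank R_\M(G,p)=3|V|-k,
\]
which together are equivalent to (1) and the equality $|E|+|V|=3|V|-k$, i.e.\ (2). Conversely, given (1) and (2), summing yields $\rank R_\M(G,p)=|E|+|V|$, and the two rank equivalences above recover both independence and infinitesimal rigidity.

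The only delicate point is the generic claim $\dim T=k$. This should be a straightforward consequence of the irreducibility of $\M$ together with the genericity hypothesis on $p$, since any polynomial relation among the coordinates of $p$ that forced a non-trivial combination of the tangential isometry vector fields to vanish at every $p_i$ would have to lie in the ideal generated by the defining polynomial of $\M$, contradicting the linear independence of those vector fields on $\M$. With this in place the theorem is reduced to the elementary rank bookkeeping above.
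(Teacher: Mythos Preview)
This theorem is quoted from \cite{NOP} and no proof is given in the present paper, so there is nothing here to compare against directly. Your argument is the standard one and is correct: independence of $(G,p)$ on $\M$ is equivalent to $\rank R_\M(G,p)=|E|+|V|$, infinitesimal rigidity is equivalent to $\dim\ker R_\M(G,p)=k$ (hence to $\rank R_\M(G,p)=3|V|-k$), and combining the two rank equalities yields exactly conditions (1) and (2).

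One refinement of the point you flag as delicate. You phrase $\dim T=k$ via an evaluation map from a Lie algebra of tangential isometries, but Definition~\ref{d:surfacetype} in this paper defines the type $k$ not through isometries but as the largest integer with $\dim\ker R_\M(K_n,p)\ge k$ for all complete-graph frameworks. With that definition the generic claim follows more directly: the entries of $R_\M(K_n,p)$ are polynomial in the coordinates of $p$, so its rank is maximised (equivalently, the kernel dimension is minimised) on a nonempty Zariski-open subset of $\M^n$, and a generic $p$ lies in that set; hence $\dim\ker R_\M(K_{|V|},p)=k$. Identifying the rigid-motion flex space $\R_p$ of $(G,p)$ with $\ker R_\M(K_{|V|},p)$ (the paper records that this holds once $|V|\ge 6-k$) then gives $\dim T=k$ without the vector-field detour. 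Your nonvanishing argument is also valid once one notes that the relevant infinitesimal isometries are restrictions of Euclidean Killing fields and hence polynomial, but it is one step removed from the definitions actually in force here.
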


The classes of graphs in Theorem \ref{necessity} are the simple graphs that are $(2,k)$-tight,
with $k = 0,1,2,3$, in the following sense.

\begin{defn}\label{kltight}
A graph $G = (V,E)$ is \emph{$(2,k)$-sparse} if for all subgraphs  $H$, with at least one edge,
the inequalities
$|E(H)| \leq 2|V(H)| - k$ hold.
Moreover $G$ is \emph{$(2,k)$-tight} if $G$ is $(2,k)$-sparse and $|E| = 2|V| - k$.
\end{defn}

The inductive characterisations of these classes of simple graphs for $k=3,2$ and $1$ form
a key part of our approach to proving the sufficiency of the necessary counting conditions for generic infinitesimal rigidity. We describe the various construction moves in the next section.

\begin{thm}[Henneberg \cite{Hen} and Laman \cite{Lam}]\label{23}
A simple graph $G$ is $(2,3)$-tight if and only if it can be generated from $K_2$ by Henneberg $1$ and $2$ moves.
\end{thm}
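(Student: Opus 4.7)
The plan is to prove both directions by induction on $|V|$, with necessity amounting to a bookkeeping exercise and sufficiency requiring a delicate combinatorial argument for the Henneberg~$2$ inverse move.

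For necessity, I would start from $K_2$ and verify directly that both Henneberg moves preserve $(2,3)$-tightness. Each move adds one vertex and two edges (the Henneberg~$2$ move additionally removes one edge), so the top count $|E|=2|V|-3$ is maintained. Sparsity is checked by a short case split on whether a given subgraph contains the newly introduced vertex; the key point is that the new vertex has degree $2$ or $3$, which is exactly enough to keep induced subgraph counts in line.

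For sufficiency, I would argue by strong induction on $|V|$ with base case $K_2$. Given a $(2,3)$-tight simple graph $G$ on at least three vertices, the count $\sum_v\deg(v)=2|E|=4|V|-6$ forces the existence of a vertex $v$ with $\deg(v)\leq 3$, and sparsity applied to $G-v$ rules out $\deg(v)\leq 1$. When $\deg(v)=2$, the graph $G-v$ is immediately seen to be $(2,3)$-tight, and the inductive hypothesis expresses $G$ as the result of a Henneberg~$1$ move. When $\deg(v)=3$ with neighbours $a,b,c$, the task is to find a pair $xy$ among $\{a,b\},\{b,c\},\{a,c\}$ that is not an edge of $G-v$ and for which $(G-v)+xy$ is still $(2,3)$-sparse; induction then realises $v$ as the inverse of a Henneberg~$2$ move.

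The main obstacle is the existence of such an edge addition. The plan is a proof by contradiction. Call a vertex subset $S\subseteq V(G-v)$ \emph{critical} if $|E(S)|=2|S|-3$ on the induced subgraph, and call a critical set containing the endpoints of a candidate edge $xy$ a \emph{blocker} for $xy$. A standard submodular calculation shows that if two critical sets share at least two vertices together with at least one induced edge, then their union is critical. Assuming every candidate edge is blocked, I would split according to how many of the pairs $ab,bc,ca$ already lie in $G-v$: if all three do, then $\{v,a,b,c\}$ already violates sparsity in $G$; if two do, a single blocker for the remaining pair can be enlarged by the third neighbour using the existing edges, and adjoining $v$ then contradicts sparsity in $G$; in the remaining cases, the blockers for the different candidate edges can be amalgamated via the union lemma into a single critical set containing $\{a,b,c\}$, and adjoining $v$ with its three incident edges again violates $(2,3)$-sparsity of $G$. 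Executing the amalgamation cleanly, including the degenerate situation where the relevant intersection carries no edge and the union lemma must be replaced by a direct count, is the most technical part of the argument.
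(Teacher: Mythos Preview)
The paper does not actually prove this theorem: it is cited as a classical result of Henneberg and Laman, with only a one-sentence indication of the key idea in Section~\ref{graphtheory} (``The key step is to show that an inverse Henneberg $2$ operation is possible on a degree $3$ vertex $v$, by virtue of the fact that at least one of the three choices for the new edge \dots\ can be made without violating any subgraph count''). Your outline is correct and is precisely the classical argument.

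One small sharpening will make your amalgamation step cleaner. In a $(2,3)$-sparse graph, if two critical sets $S,T$ satisfy $|S\cap T|\geq 2$ then the intersection is \emph{forced} to span an edge: otherwise the supermodular inequality gives
\[
|E(S\cup T)|\;\geq\;|E(S)|+|E(T)|-|E(S\cap T)|\;=\;2|S\cup T|+2|S\cap T|-6\;\geq\;2|S\cup T|-2,
\]
already contradicting sparsity of $S\cup T$. So the ``degenerate situation where the relevant intersection carries no edge'' that you flag never occurs once $|S\cap T|\geq 2$, and the union lemma applies unconditionally there. The case that genuinely requires a separate direct count is when all three pairwise intersections among the blockers $S_{ab},S_{bc},S_{ca}$ are singletons $\{a\},\{b\},\{c\}$; then the triple intersection is empty, the three induced edge sets are pairwise disjoint, and inclusion--exclusion on $U=S_{ab}\cup S_{bc}\cup S_{ca}$ gives $|E(U)|\geq 2|U|-3$, so $U$ is critical, contains $a,b,c$, and adjoining $v$ yields the contradiction. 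With this in hand your case split goes through without difficulty.
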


This characterisation plays a role in the following extension of Laman's theorem.

\begin{thm}\cite{NOP}\label{unionspheres}
Let $G=(V,E)$, let $\M$ be a union of parallel planes or a union of concentric spheres and let $p$ be generic
on $\M$. Then $(G,p)$ is isostatic on $\M$ if and only if $G$ is $K_1, K_2$ or $(2,3)$-tight.
\end{thm}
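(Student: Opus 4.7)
The plan is the classical necessity/sufficiency split for Laman-type theorems, with sufficiency driven by the Henneberg inductive generation of $(2,3)$-tight simple graphs (Theorem~\ref{23}).

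For necessity, suppose $(G,p)$ is generic and isostatic on $\M$. If $|V|\leq 2$ then $G\in\{K_1,K_2\}$ and we are done. Otherwise, Theorem~\ref{necessity} with $k=3$ yields either $G\in\{K_3,K_4\}$ or the $(2,3)$-tightness of $G$. Since $K_3$ is itself $(2,3)$-tight (the counts $|E(K_3)|=3=2\cdot 3-3$ and the trivial subgraph inequalities hold), the one remaining case to rule out is $G=K_4$. Here $|E|+|V|=10$, but by Definition~\ref{d:surfacetype} applied with $k=3$ the rank of $R_\M(K_4,p)$ is at most $3|V|-3=9$ on a type-$3$ surface, so the rows of $R_\M(K_4,p)$ are linearly dependent for every $p$ and $(K_4,p)$ cannot be isostatic.

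For sufficiency I would induct on $|V|$. The base cases $K_1,K_2$ are handled by a direct rank computation on $\M$: at a single generic vertex every tangential infinitesimal motion extends to a rigid motion of a type-$3$ surface, and for $K_2$ at generic $p$ the single edge row is linearly independent of the two surface-normal rows, giving the required rank $3=3|V|-3$. If $G$ is $(2,3)$-tight on $|V|\geq 3$ vertices, Theorem~\ref{23} provides a reduction to a smaller $(2,3)$-tight graph $G'$ via the inverse of a Henneberg~$1$ or Henneberg~$2$ move; by the inductive hypothesis a generic $(G',p')$ is isostatic on $\M$. Sufficiency therefore reduces to proving that both Henneberg moves preserve generic isostaticity on unions of parallel planes and on unions of concentric spheres. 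The Henneberg~$1$ move is routine: attaching a vertex $w$ of degree two at a generic $p(w)\in\M$ adds three rows (two edge rows and one surface-normal row) and three columns to $R_\M$, and these three new rows are generically linearly independent of each other and of the pre-existing rows, so the rank grows by exactly three, as required.

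The genuine obstacle is the Henneberg~$2$ move, in which an edge $xy$ of $G'$ is deleted and a new vertex $w$ of degree three is inserted adjacent to $x,y,z$. I would handle this by the classical specialisation strategy: choose $p(w)$ at a specific non-generic but admissible point of $\M$ so that the new rows for $wx$ and $wy$ satisfy exactly one linear relation which reproduces the row for the deleted edge $xy$ in the combined rigidity matrix. At such a specialisation every infinitesimal flex of the enlarged framework restricts, via this inherited constraint, to an infinitesimal flex of the old framework $(G',p')$, while the remaining constraints (the edge $wz$ and the surface normal at $w$) generically determine the velocity at $w$ uniquely from the velocities at $x,y,z$. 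This produces a bijection between the flex spaces of the two frameworks at the specialisation, forcing the new rigidity matrix to attain the correct rank $3|V|-3$; upper semicontinuity of rank then transfers independence back to generic $p(w)\in\M$. The technical work lies in exhibiting such a specialisation on each surface family: for parallel planes I would place $w$ on a plane of $\M$ in a configuration collinear with a chosen translate of the chord $\overline{p(x)p(y)}$, and for concentric spheres I would use an analogous configuration on a suitable sphere of $\M$, exploiting the rotational symmetry of the union together with the classical coning/projection transfer between the sphere and the plane for a single sphere. Once these specialisations are realised on both surface families, the induction closes.
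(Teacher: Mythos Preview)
This theorem is quoted from \cite{NOP} and is not proved in the present paper; there is no in-paper proof to compare against. Your overall architecture---necessity via Theorem~\ref{necessity} and the type-$3$ rank bound to exclude $K_4$, sufficiency by Henneberg induction through Theorem~\ref{23}---is the standard one and is sound. Lemma~\ref{Hen1} of this paper handles your Henneberg~$1$ step verbatim.

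Your Henneberg~$2$ argument has a real gap. The collinear specialisation (place $p(w)$ on the line through $p(x),p(y)$ so that the rows for $wx,wy$ combine to the deleted $xy$ row) requires a point of $\M$ on that line distinct from $p(x),p(y)$. When $\M$ is a union of exactly two parallel planes with $p(x),p(y)$ on different planes, the line meets $\M$ only at $p(x)$ and $p(y)$, so no such specialisation exists; the analogous obstruction arises for two concentric spheres. Your phrase ``collinear with a chosen translate of the chord'' does not repair this, since translating the chord destroys the row identity you need. Two clean fixes are available. First, for parallel planes one can project orthogonally to a single plane and for concentric spheres one can cone to a single sphere; both reductions identify $R_\M$-rigidity with ordinary planar rigidity, after which the classical collinear specialisation applies without obstruction---you allude to this for spheres but should invoke it as the primary mechanism for both families. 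Second, the limiting argument of Lemma~\ref{hen2independent} in this paper (letting $p(w)\to p(y)$ along a well-chosen tangent direction) avoids the need for a global collinear point altogether and goes through on these reducible surfaces with only cosmetic changes.
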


\begin{thm}\cite{NOP,N&O}\label{22}
For a simple graph $G$ the following are equivalent:
\begin{enumerate}
\item $G$ is $(2,2)$-tight,
\item $G$ can be generated from $K_1$ by Henneberg $1$, Henneberg $2$ and graph extension moves,
\item $G$ can be generated from $K_1$ by Henneberg $1$, Henneberg $2$, vertex-to-$K_4$ and vertex splitting moves.
\end{enumerate}
\end{thm}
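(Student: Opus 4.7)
The plan is to establish the three-way equivalence by proving the forward implications (2)$\Rightarrow$(1) and (3)$\Rightarrow$(1), together with a substantive reverse direction, say (1)$\Rightarrow$(3), and then obtaining (1)$\Rightarrow$(2) either by a parallel argument or by showing that each graph extension move can be simulated by a sequence of vertex-to-$K_4$ and vertex-splitting moves (and conversely).

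The forward implications are direct verifications. Each of the five construction moves adds exactly $2n$ edges whenever it introduces $n$ new vertices, so the top count $|E|=2|V|-2$ is preserved. Sparsity is preserved by a case analysis: for any subgraph $H'$ of the output graph, either $H'$ avoids the newly introduced vertex (in which case $H'$ is a subgraph of the input and the bound is inherited), or it contains the new vertex, in which case restricting to the induced subgraph on the original vertices and accounting for the extra edges yields $|E(H')|\leq 2|V(H')|-2$ from the input sparsity. The vertex-to-$K_4$ and graph extension moves require slightly more bookkeeping but the arithmetic is the same.

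For (1)$\Rightarrow$(3), I would induct on $|V|$, with base case $K_1$. Since $|E|=2|V|-2$ gives $\sum_v\deg(v)=4|V|-4$, the minimum degree is at most $3$. If some $v$ has degree $2$, then $G-v$ is still $(2,2)$-tight (sparsity is automatic and the edge count drops by $2$), giving an inverse Henneberg~$1$. If the minimum degree is exactly $3$, let $v$ have neighbours $a,b,c$, and call a pair $xy\in\binom{\{a,b,c\}}{2}$ \emph{admissible} if $xy\notin E$ and $(G-v)+xy$ is $(2,2)$-tight. An admissible pair yields an inverse Henneberg~$2$ move. A pair $xy$ is inadmissible exactly when either $xy\in E$ or $xy$ lies in a maximal tight subgraph (``block'') of $G-v$. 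When no admissible pair exists, the standard submodularity of the function $H\mapsto 2|V(H)|-|E(H)|$ forces intersections of overlapping blocks to be blocks, and this constrains the local structure around $v$ to be either a $K_4$ on $\{v,a,b,c\}$ (permitting an inverse vertex-to-$K_4$ move) or to admit a vertex-splitting decomposition at $v$.

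The main obstacle is the degree-$3$ case when no inverse Henneberg~$2$ move is admissible. The block-lattice argument required there is standard in rigidity combinatorics but needs a careful case split to identify which of the remaining moves applies. Once (1)$\Leftrightarrow$(3) is established, the equivalence with (2) can be obtained by showing that the graph extension move is expressible as a sequence of vertex-to-$K_4$ and vertex-splitting moves through intermediate $(2,2)$-tight graphs, so that the two generated classes coincide.
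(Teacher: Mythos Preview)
First, note that the paper does not prove this theorem directly; it is cited from \cite{NOP,N&O}. The paper does, however, prove the closely related Theorem~\ref{t:22characterisation} (using the vertex-to-$4$-cycle move in place of vertex splitting), and that proof is the natural comparison point.

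Your proposal has a genuine gap in the degree-$3$ case. You correctly observe that if no inverse Henneberg~$2$ is admissible at a degree-$3$ vertex $v$ with neighbours $a,b,c$, then $\{v,a,b,c\}$ induces a $K_4$. But you then assert that this ``permit[s] an inverse vertex-to-$K_4$ move'' without checking admissibility. The $K_4$-to-vertex contraction can fail to produce a \emph{simple} graph: if some vertex $w$ outside the $K_4$ is adjacent to two of $a,b,c$, the contraction creates a parallel edge. Your fallback, a ``vertex-splitting decomposition at $v$'', is not explained and does not obviously work either: every edge at $v$ lies in the $K_4$, so contracting any such edge (the inverse of a vertex split) again produces parallel edges from the remaining $K_4$ vertices. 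The ``block-lattice'' remark does not supply the missing argument. The paper's proof of Theorem~\ref{t:22characterisation} handles exactly this obstruction: when such a $w$ exists, $(2,2)$-sparsity forbids the edge $wc$, and one shows that the $4$-cycle contraction $v\to w$ through $a,w,b$ is admissible. The version with vertex splitting (from \cite{N&O}) requires a different argument that your sketch does not provide.

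There is also a direction error at the end. To deduce (1)$\Rightarrow$(2) from (1)$\Leftrightarrow$(3) you would need every construction sequence of type (3) to be rewritable as one of type (2), i.e.\ that vertex-to-$K_4$ and vertex splitting can be simulated by graph extension moves---not the converse you state. The paper remarks that vertex-to-$K_4$ is a special case of graph extension, but vertex splitting is not obviously so, and you give no argument for it.
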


\begin{thm}\cite{NOP}\label{unioncylinders}
Let $G=(V,E)$, let $\M$ be a cylinder or a union of concentric cylinders and let $p$ be generic on $\M$. Then $(G,p)$ is isostatic on
$\M$ if and only if $G$ is  $K_1, K_2, K_3$ or is $(2,2)$-tight.
\end{thm}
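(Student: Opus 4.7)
The proof splits naturally into necessity and sufficiency. For necessity, the cylinder (and hence any union of concentric cylinders, locally) has type $k=2$ in the sense of Definition~\ref{d:surfacetype}, since the only tangential isometries are axial translation and axial rotation. Thus Theorem~\ref{necessity} gives immediately that any isostatic generic framework $(G,p)$ with $G\notin\{K_1,K_2,K_3,K_4\}$ must be $(2,2)$-tight; the four exceptional small graphs are dispatched by direct inspection of the rigidity matrix $R_\M(G,p)$, noting that $K_4$ itself satisfies $|E|=6=2|V|-2$ and is $(2,2)$-tight, while $K_1,K_2,K_3$ are trivially maximally independent on $\M$.

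For sufficiency, the plan is to use the inductive characterisation in Theorem~\ref{22}(3): every simple $(2,2)$-tight graph can be built from $K_1$ by a sequence of Henneberg~$1$, Henneberg~$2$, vertex-to-$K_4$, and vertex-splitting moves. Since the count $2|V|-|E|=2$ is preserved by each of these moves, Theorem~\ref{isostaticnecessity} reduces the problem to showing that each move preserves the maximum rank condition $\rank R_\M(G,p)=3|V|-2$ at a generic realisation. The base case $G=K_1$ is immediate, since a single point on $\M$ has a two-dimensional tangent space, matching the two tangential isometries of the cylinder.

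The strategy for the individual moves is as follows. The Henneberg~$1$ move (adding a $2$-valent vertex) is handled by a standard dimension count: a generic new point on $\M$ together with two generic edges produces two new independent rows of the rigidity matrix while adding three columns, and one checks directly that the two new edge-rows, combined with the normal-row at the new vertex, form a full-rank $3\times 3$ block. Vertex splitting is handled similarly by a local perturbation argument at the split vertex, using that the cylinder surface is smooth at the split point so that the tangent plane carries the needed dimensions. The vertex-to-$K_4$ move is handled by observing that $K_4$ is independent and isostatic on the cylinder (verified for a single generic instance), so replacing a vertex with a rigid $K_4$-cluster neither creates nor destroys flexes beyond those of the ambient framework; this is made precise by a contraction argument letting the $K_4$-cluster shrink to a point and using that the contracted limit has the rigidity matrix of the original graph plus full-rank internal rows for the $K_4$.

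The main obstacle, as in the Laman setting and as foreshadowed in the introduction of the present paper, will be the Henneberg~$2$ move. Here one removes an edge $e=uv$ and introduces a new vertex $w$ attached to $u$, $v$, and a third vertex $z$. I would proceed by algebraic specialisation, in the spirit of Section~\ref{sec:algh2}: place $w$ generically on the cylinder and on the straight line through $p(u)$ and $p(v)$, so that the new edges $uw$ and $wv$ become parallel; in this degenerate but still regular configuration the rigidity matrix of the Henneberg~$2$ child $G'$ acquires a predictable row-reduction which identifies a rank drop of exactly one against the parent $G$, while the extra edge $wz$ restores that rank provided $z$ is generic. Semi-continuity of matrix rank then transfers independence from the degenerate realisation back to a generic one. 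The cylinder-specific input — and the reason this is delicate — is the need to verify that the second-order geometry of $\M$ (its non-trivial normal curvature in the circular direction) does not force the specialisation to carry a spurious tangential flex; a careful choice of the specialisation point, using the rational parametrisation of the cylinder, avoids this. With all four moves shown to preserve generic isostaticity, induction on the length of the construction sequence from $K_1$ completes the proof.
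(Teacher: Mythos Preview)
Your overall architecture is sound and matches exactly what the paper itself suggests (see the remark immediately following Theorem~\ref{unioncylinders}): necessity via Theorem~\ref{necessity}, sufficiency via the inductive scheme of Theorem~\ref{22}(3), with the four moves handled by the results of Sections~\ref{s:hennebergmoves} and~\ref{vertexmoves}. The base case, Henneberg~$1$, vertex splitting, and vertex-to-$K_4$ are all fine; indeed the paper proves exactly the lemmas you sketch (Lemmas~\ref{Hen1}, \ref{Vertexsplitprop}, and Corollary~\ref{vertex2k4cylinder}).

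The genuine gap is your Henneberg~$2$ argument. You propose to specialise the new vertex $w$ to lie on the straight line through $p(u)$ and $p(v)$, so that the rows for $uw$ and $wv$ become linearly related and reproduce the row for $uv$. This is the classical planar argument, and the paper explicitly explains (in the discussion preceding Lemma~\ref{hen2independent}) why it fails on the cylinder: for a degree~$2$ vertex added to a generic edge on the cylinder, the locus of placements at which the three new rows are dependent is not the line through the edge but a set of only \emph{four} isolated points, and the dependencies arising at those points do not combine with the removed edge-row in the way the planar argument requires. A generic point of the line through $p(u),p(v)$ does not lie on the cylinder, and even the intersection points (when they exist) are not the rank-drop points. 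So your specialisation does not produce a configuration with the predicted row-reduction, and the semi-continuity step has nothing to transfer.

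The paper gives two working substitutes. Lemma~\ref{hen2independent} uses a sequential convergence argument: one lets $p_0^k\to p_2$ along a tangent direction $a$ chosen so that $a$ is orthogonal to a tangent vector $b$ at $p_2$ with $b\perp(p_2-p_1)$; the limit of any proper flex then restricts to a proper flex of $(G,p)$. Section~\ref{sec:algh2} gives an alternative, fully algebraic proof: one shows that a certain $4\times 4$ determinant $\det D(P_v)$ would have to lie in the ideal generated by the surface polynomial, and then evaluates its gradient at $p_1$ to derive a contradiction with the flex condition on the removed edge. Either route closes the gap; your collinear specialisation does not.
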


We observe that this theorem can also be proven, using the methods of this paper, by applying the equivalence of $(1)$ and $(3)$ in Theorem \ref{22} or by applying Theorem \ref{t:22characterisation}.

%Let $K_n \sqcup K_m$ denote the unique graph
%which is the join of $K_n$ and $K_m$ over a single common edge and its two vertices.
%The following theorem provides an alternative  to  Theorem \ref{t:21characterisation} with vertex splitting replacing the vertex-to-$4$-cycle move.

We also note the following alternative characterisation of $(2,1)$-tight simple graphs to that which is given in Theorem \ref{t:21characterisation}. This will not be needed for the proofs in this paper but we note that at the expense of introducing
the new base graph $K_4\sqcup K_4$ one may replace the vertex-to-$4$-cycle move by the more localised vertex splitting move. ($K_4 \sqcup K_4$ consists of two copies of $K_4$ sharing exactly one edge.)

\begin{thm}\cite{N&O}\label{21}
A simple graph $G$ is $(2,1)$-tight if and only if it can be generated from $K_5\backslash e$ or $K_4 \sqcup K_4$ by Henneberg $1$, Henneberg $2$, vertex-to-$K_4$, vertex splitting and edge joining moves.
\end{thm}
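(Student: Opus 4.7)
The plan is to prove the two directions separately. For the sufficiency direction, the task is essentially mechanical counting: one first checks that both base graphs are $(2,1)$-tight. For $K_5\setminus e$ we have $|V|=5, |E|=9=2|V|-1$, and all proper subgraphs lie in $K_5$ so sparsity is immediate; for $K_4\sqcup K_4$ we have $|V|=6, |E|=11=2|V|-1$, with the only risk of failure being the shared-edge $K_4$'s, which are individually $(2,1)$-sparse. Then for each of the five moves one verifies that the values of $2|V|-|E|$ and the maximum of $|E(H)|-2|V(H)|+1$ over subgraphs $H$ are preserved. Henneberg 1 and Henneberg 2 add a vertex with $\Delta E = 2$; vertex splitting adds a vertex with $\Delta E = 2$; vertex-to-$K_4$ adds three vertices with $\Delta E = 6$; edge joining between two disjoint $(2,1)$-tight graphs adds $\Delta E = 1$ and $\Delta V = 0$ but raises the minus constant from $-1-1=-2$ to $-1$. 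In each case, sparsity preservation reduces to a local check on subgraphs meeting the new vertices or the new edge.

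For the necessity direction, the plan is an induction on $|V|$. Let $G$ be $(2,1)$-tight with $|V|$ greater than the base graphs; we produce an inverse move yielding a smaller $(2,1)$-tight graph. Summing degrees gives $\sum_v\deg(v)=4|V|-2$, so the average degree is less than $4$; moreover a vertex of degree $0$ or $1$ would force a sparsity violation of $G-v$, so $\delta(G)\in\{2,3\}$. If some $v$ has degree $2$, inverse Henneberg 1 (deletion of $v$) is immediately $(2,1)$-tight, since deleting $v$ drops $|E|$ by $2$ and $|V|$ by $1$.

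So assume $\delta(G)=3$ and fix $v$ of degree $3$ with neighbours $\{a,b,c\}$. Consider inverse Henneberg 2: delete $v$ and add one edge among the non-adjacent pairs in $\{a,b,c\}$. This fails for a pair $\{x,y\}$ precisely when there is a \emph{critical} (tight) subgraph of $G-v$ containing both $x$ and $y$ with $xy\notin E$. The strategy, following \cite{N&O}, is to study the intersection pattern of the maximal critical subgraphs containing pairs from $\{a,b,c\}$, exploiting the submodularity of the function $H\mapsto 2|V(H)|-|E(H)|$. When inverse Henneberg 2 is globally blocked, one shows that the obstructing critical subgraphs force either (i) $v$ to lie in a copy of $K_4$ whose contraction is $(2,1)$-tight (inverse vertex-to-$K_4$), or (ii) two neighbours of $v$ are mergeable (inverse vertex splitting), or (iii) some edge of $G$ forms a $1$-edge cut whose removal yields two $(2,1)$-tight components (inverse edge joining). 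The irreducible obstructions after all such reductions are exhausted are then exactly $K_5\setminus e$ and $K_4\sqcup K_4$.

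The main obstacle is the third-paragraph case analysis: proving that when both Henneberg moves are blocked at every degree $3$ vertex, one of the three remaining inverse moves must apply. This requires careful matroidal bookkeeping of how critical subgraphs through the neighbourhood of $v$ can overlap; the hardest configurations are those in which the blocking critical subgraphs pairwise share only $v$'s neighbours, which is exactly where the local-$K_4$ and $4$-cycle/merge structures emerge. The extra base graph $K_4\sqcup K_4$ (versus the single base $K_5\setminus e$ of Theorem \ref{t:21characterisation}) is needed precisely because the more localised vertex splitting move cannot, in general, remove a $K_4$-shared-edge configuration without passing through a non-simple graph, unlike the more flexible vertex-to-$4$-cycle move.
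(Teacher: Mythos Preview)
The paper does not actually prove this theorem: it is quoted verbatim from \cite{N&O} as background, with the explicit remark that ``This will not be needed for the proofs in this paper.'' There is therefore no proof in the present paper to compare your attempt against. The paper's own combinatorial contribution is the distinct Theorem~\ref{t:21characterisation}, which trades vertex splitting for the vertex-to-$4$-cycle move and thereby eliminates the second base graph $K_4\sqcup K_4$.

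On the merits of your proposal itself: the sufficiency direction is essentially fine as a sketch, and your counting checks are correct. But the necessity direction is not a proof --- it is a plan for a proof, and you acknowledge this yourself when you write ``The main obstacle is the third-paragraph case analysis'' and then describe in outline what that analysis would have to establish without carrying any of it out. The substantive content of the theorem lies precisely in showing that when inverse Henneberg moves are blocked at every degree-$3$ vertex, one of the remaining three inverse moves must be admissible, terminating at one of the two base graphs. Your paragraph gestures at submodularity and ``matroidal bookkeeping'' but does not execute the argument, identify the specific configurations that arise, or verify that the case split is exhaustive. In particular, you assert without justification that the irreducible obstructions are exactly $K_5\setminus e$ and $K_4\sqcup K_4$; this is the conclusion of the theorem, not an intermediate step you may assume. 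If you want a model for how such an argument is structured, the paper's proof of Theorem~\ref{t:21characterisation} (which handles the closely related move set) does carry out the full case analysis explicitly and would be a good template.
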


\section{Simple $(2,1)$-tight graphs}
\label{graphtheory}

In this section we prove Theorem \ref{t:21characterisation} and in Theorem \ref{t:22characterisation} we obtain an analogous
characterisation of $(2,2)$-tight simple graphs. We remark that the insistence on simplicity of the graph makes the construction problematic. Indeed if we permit loops and parallel edges then a general construction theorem of Fekete and Szeg\H{o} \cite{F&Z} applies. In the case of $(2,1)$-tight graphs it ensures that the only operations required are Henneberg 1 and 2 type operations. Fekete and Szeg\H{o}'s result extended work of Tay on $(k,k)$-tight graphs for the characterisation of the generic rigidity of body-bar frameworks in arbitrary dimension \cite{Tay2}.

The inductive characterisation of $(2,3)$-tight graphs of Henneberg and Laman
starts with the  elementary counting observation that for $k \geq 1$ the average degree in a
$(2,k)$-tight graph is less than $4$, while there can be no vertices of degree less than $2$.
Thus degree $2$ or degree $3$ vertices exist. Henneberg introduced the following two operations on graphs
which maintain the sparsity count, increase the vertex count by $1$, and add a new vertex of
degree $2$ or degree $3$ respectively.
See also the discussions in \cite{Lam,nix-ros,T&W}.

The \emph{Henneberg $1$} move augments a graph $G$ by adding
a vertex $v$ of degree $2$  and two edges $vv_1, vv_2$ from it to distinct neighbours
$v_1, v_2$ in $G$.

The \emph{Henneberg $2$} move  removes an edge $v_1v_2$ from a graph and adds a vertex $v$ of degree
$3$ with distinct neighbours $v_1, v_2, v_3$ for some vertex $v_3$. 
%see Figure \ref{hen2}.
This is also referred to as edge splitting or 1-extension in the literature.

\begin{center}
\begin{figure}[ht]
\centering
\includegraphics[width=9cm]{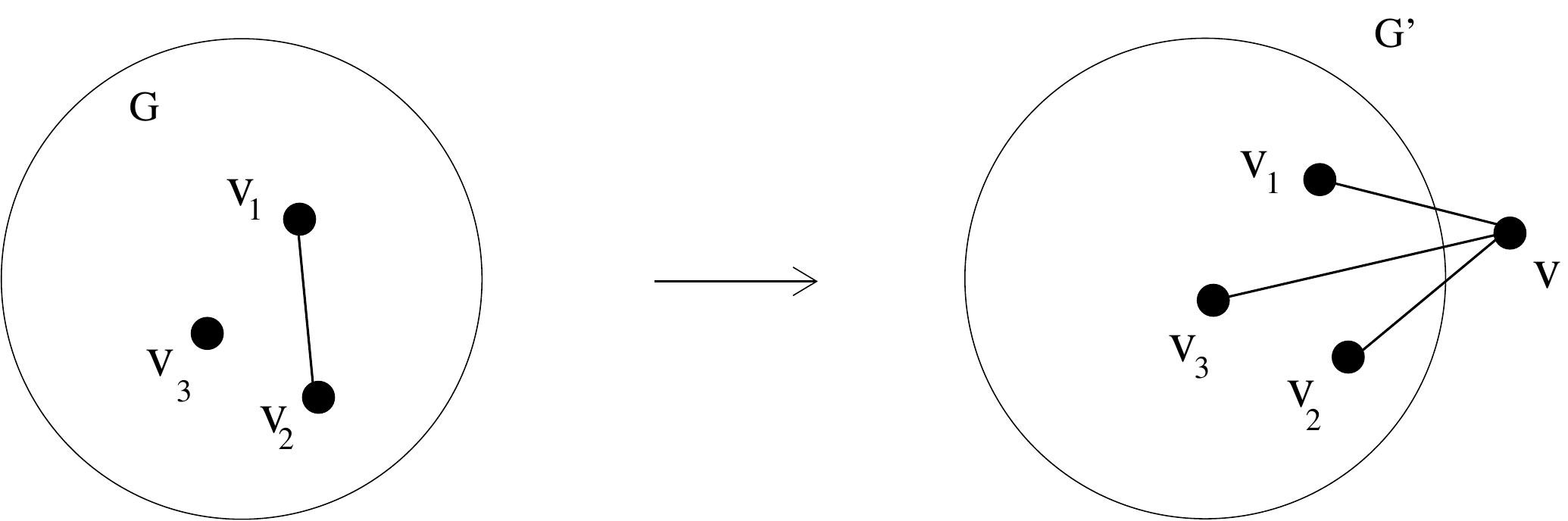}
\caption{The Henneberg $2$ move.}
\label{hen2}
\end{figure}
\end{center}

Laman proved that every $(2,3)$-tight graph can be generated recursively from $K_2$ by a sequence of these operations. The key step is to show that an  inverse Henneberg $2$ operation
is possible on a degree 3 vertex $v$, by virtue of the fact that at least one of the three choices for the new edge ($v_1v_2, v_2v_3$ or $v_3v_1$) can be made without violating any subgraph count.

On the other hand, for a $(2,2)$-tight graph it is easy to see that degree $3$ vertices may be contained within subgraphs isomorphic to $K_4$ and so admit no inverse Henneberg 2 move. Indeed there are countably many $(2,2)$-tight graphs for which every vertex of degree less than $4$ is contained in a copy of $K_4$.
In view of this obstruction, in \cite{NOP,N&O} we considered additional graph moves preserving $(2,2)$-tightness, including those indicated in Figure \ref{Extension} and Figure \ref{Vertex splitting}.
These are the  \emph{vertex-to-$K_4$ move} and the \emph{vertex splitting move}.

\begin{center}
\begin{figure}[ht]
\centering
\includegraphics[width=6cm]{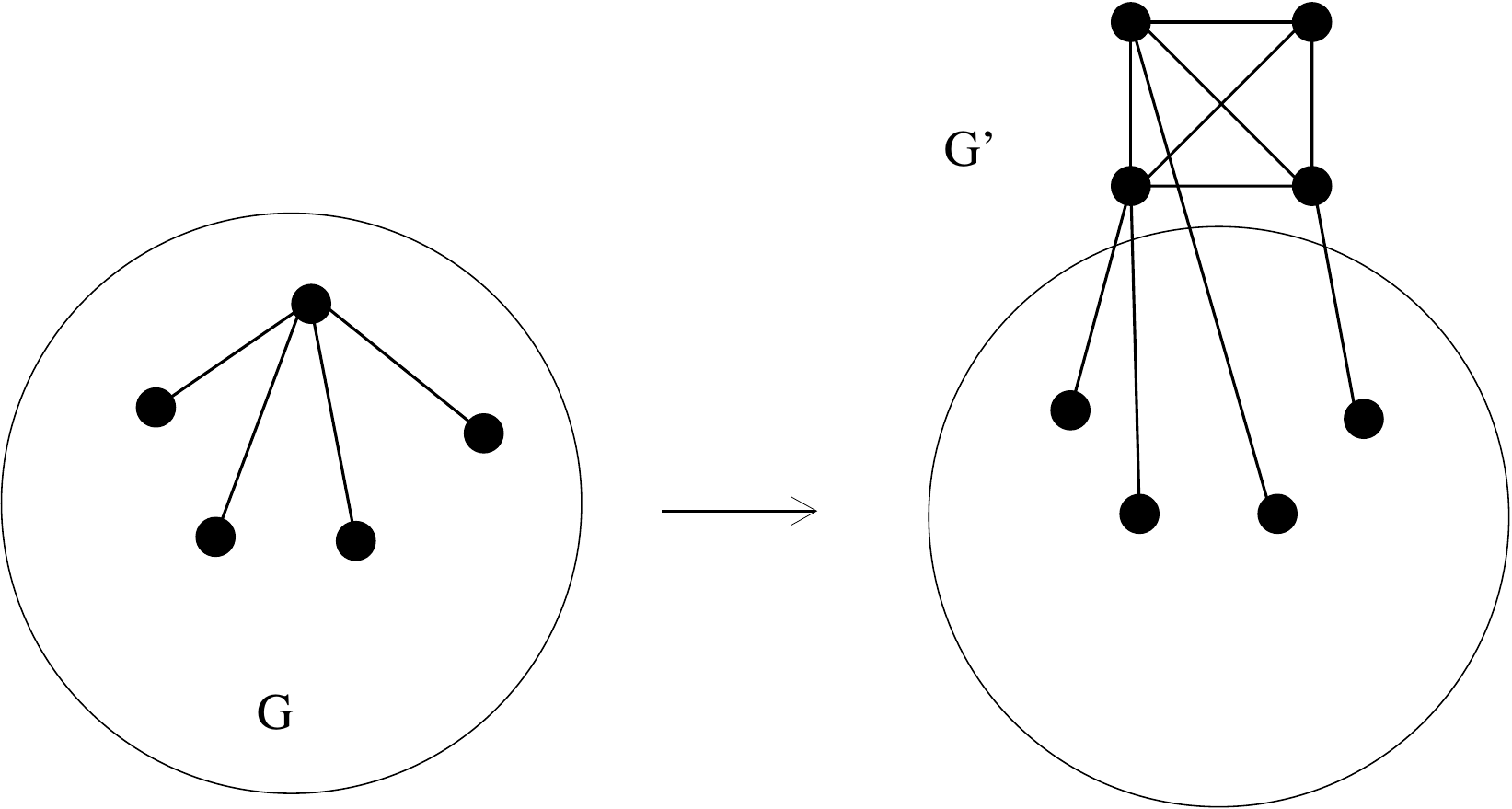}
\caption{The vertex-to-$K_4$ move.}
\label{Extension}
\end{figure}
\end{center}

The \emph{vertex-to-$K_4$ move} substitutes a copy of $K_4$ in place of a vertex $v$, with an arbitrary replacement of edges $xv$ by edges $xw$ with $w$ in $V(K_4)$. %More generally, the \textit{graph extension move} performs similar surgery with $v$ replaced by a $(2,2)$-tight graph. 
The inverse operation, contracting a copy of $K_4$ to a single vertex will be called the $K_4$-to-vertex move.

%The inverse move associated with the vertex-to-$K_4$ move is not always admissible, in the sense that, it does not always preserve the property of being $(2,k)$-tight. When the inverse move is admissible we call it an admissible $K_4$-to-vertex move.

%we refer to as an admissible $K_4$-to-vertex move, or simply as an admissible $K_4$ contraction.

\begin{center}
\begin{figure}[ht]
\centering
\includegraphics[width=6.6cm]{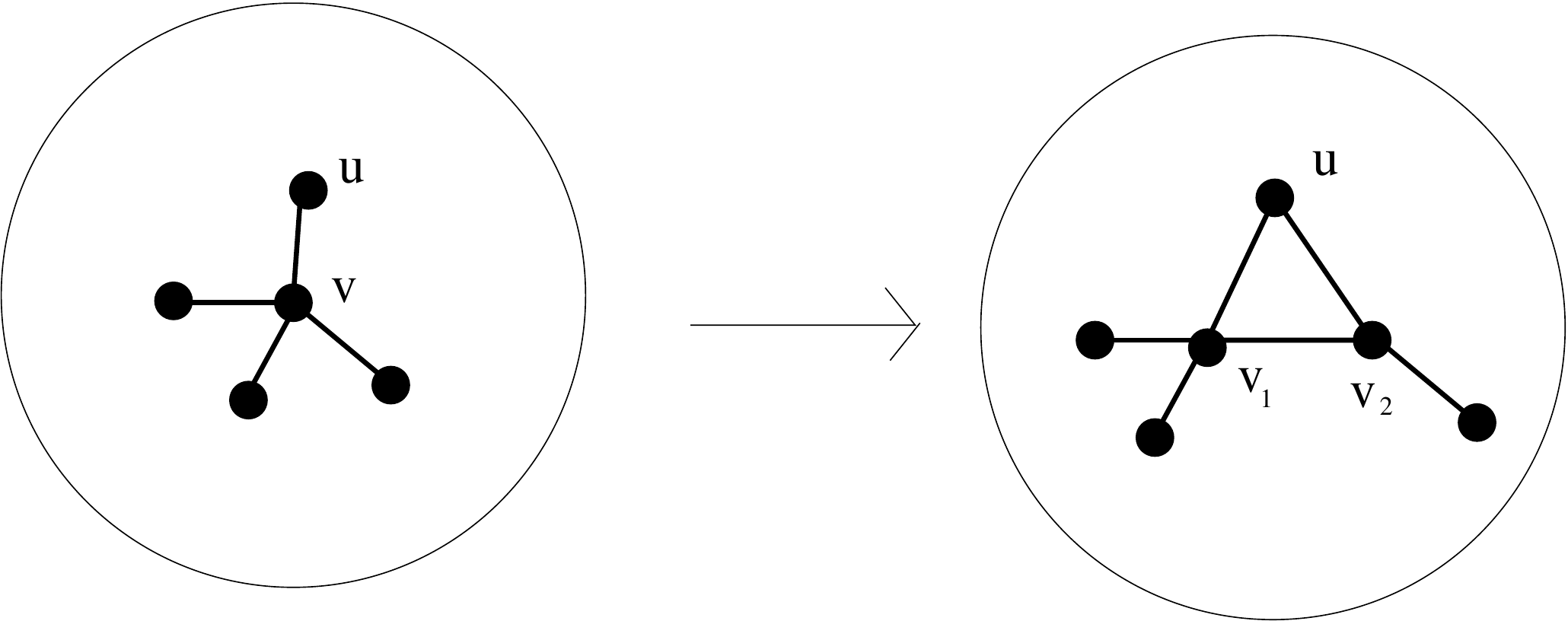}
\caption{The vertex splitting move.}
\label{Vertex splitting}
\end{figure}
\end{center}

The \emph{vertex splitting} operation removes an edge $uv$ and vertex $v$ and inserts a copy of $K_3$ on vertices $u,v_1,v_2$ and assigns all edges $xv$ into either an edge $xv_1$ or an edge $xv_2$. That is, let $G=(V,E)$, let $V^*:=V\setminus v$, let $E^*:=E(G[V^*])$ and let $E':=\{xy \in E: y=v\}$. Thus $G=(V,E)=(V^*+v,E^*+E'+uv)$ and if $G'$ is the result of a vertex split on the edge $uv$ then $G'=(V',E')=(V^*+\{v_1,v_2\},E^*+E_1'+E_2'+\{uv_1,uv_2,v_1v_2\})$ where $E_i':=\{xy \in E': x=v_i\}$ is an arbitrary partition of $E'$.
We refer to the inverse move as an edge contraction.

We remark that the vertex-to-$K_4$ move is a special case of the more elaborate \emph{graph extension move} employed in [14]. We also remark that the vertex splitting move is not needed for the proof of our main theorem.

Finally, an \emph{edge-joining} move \cite{N&O} combines two graphs $G$, $H$ to form a new graph with the vertices and
edges of these graphs together with an additional connecting edge $e=gh$ with $g$ in $G$ and $h$ in $H$.

%The proof of the inductive characterisation of $(2,1)$-tight graphs given in Theorem \ref{21}
%is proven along the following lines. By simple counting, if there are no inverse Henneberg $1$ moves then there are at least two vertices of degree $3$. If there are also no inverse Henneberg $2$ moves then it is shown that all such vertices are in subgraphs which are copies of $K_4$. In this case there is an admissible $K_4$-contraction move unless there are triangle obstructions  in the sense of there being inclusions $K_4\to K_4\sqcup K_3$. In this case contraction of the $K_4$ graph violates simplicity.  However, if triangle obstructions persist this leads to the existence of an admissible triangle contraction, that is, to an inverse vertex splitting move. In this way one can arrive at the following key lemma
%of \cite{N&O}) which in turn leads to Theorem \ref{21}.
%
%\begin{lem}
%Let $G$ be a $(2, 1)$-tight graph  which contains
%a copy of $K_3$. Then either $G = K_4$, $G$ has an admissible inverse vertex-splitting move, an admissible $K_4$ contraction, or every copy of $K_3$ is in a copy
%of $K_4 \sqcup K_4$ or $K_5 \backslash e$.
%\end{lem}

The \emph{vertex-to-$4$-cycle} move is a certain vertex splitting operation, as in Figure 4. The vertex $v_1$ is split to two vertices $v_1$ and $v_0$ and the edges $v_1v_2$ and $v_1v_3$ are duplicated as $v_0v_2$ and $v_0v_3$.
Other edges of the form $vv_1$ are either left or are replaced by $vv_0$.
The move preserves $(2,k)$-tightness and after the vertex split there will be no edge pair $wv_1, wv_0$ with $w \neq v_2,v_3$ and no edge $v_0v_1$.

\begin{center}
\begin{figure}[ht]
\centering
\includegraphics[width=8cm]{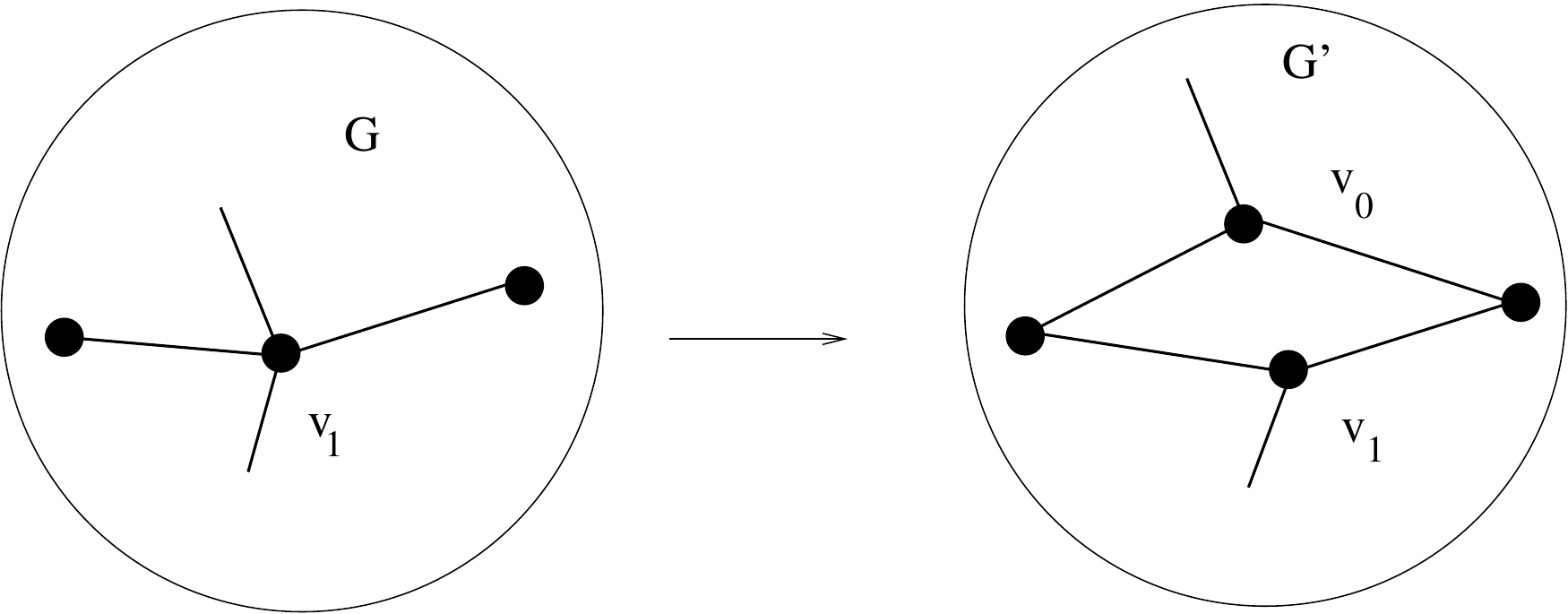}
\caption{The vertex-to-$4$-cycle move.}
\label{vertex-to-4-cycle}
\end{figure}
\end{center}

The inverse of such a move on a simple $(2,1)$-tight graph will be referred to as a $4$-cycle contraction.

For each of our moves we use the term \emph{admissible}, if whenever $G$ is simple and $(2,k)$-tight, the result of the move is also simple and $(2,k)$-tight. It is evident that inverse Henneberg 1 moves and, when $k=1$, inverse edge joining moves are always admissible. For each of the other moves it is easy to observe examples when the inverse moves are non-admissible.

We now prove Theorem \ref{t:21characterisation}.

%We now prove Theorem \ref{t:21characterisation} which gives a somewhat more efficient characterisation
%of simple $(2,1)$-tight graphs than Theorem \ref{21}.

\begin{proof}[Proof of Theorem \ref{t:21characterisation}] 
A basic incidence degree counting argument shows that if $G$ is $(2,1)$-tight with no possible inverse Henneberg $1$ moves then there are at least two degree $3$ vertices.  
%Indeed, if there are $n_k$ vertices of degree $k$ then we have $3n_3 + \dots +4n_4+\dots + rn_r= 2|E|=2(2|V|-1)$
%and so
%\[
%\sum_{k=1}^r (k-4)n_k =-2.
%\]
In fact these vertices must be in copies of $K_4$ graphs. If not then there is a potential
inverse Henneberg $2$ move and a counting argument shows that these
inverse moves are admissible. (This counting argument is given explicitly in \cite[Lemma $3.1$]{N&O}.)

Suppose now that one of these $K_4$ graphs does not have an admissible contraction to a vertex.
Then there is a vertex $w$ outside this $K_4$ with two edges to distinct vertices $a, b$ in this $K_4$, neither of which is of degree $3$ in $G$. Thus there is a $4$-cycle including $w, a, b$ and a degree $3$ vertex $v$ of the $K_4$ graph. The edge $vw$ is absent from $G$  and so there is a potential inverse $4$-cycle contraction, with $v \to w$, as long as the edge $wc$ is absent, where $c$ is the fourth $K_4$ vertex, since the move creates this edge.

Suppose first that $wc$ is absent. We claim that
the $4$-cycle contraction is admissible.

To check this we need only consider the count change for subgraphs $Y$ of $G$, with $f(Y)=2|V(Y)|-|E(Y)|=1$, which include $w$ and $c$
(and so receive $wc$) but not $v$ (since otherwise the count change is zero). There are three cases:

(i) $a$ and $b$ are in $Y$. However, adding $v$ and $3$ edges would create $Y^+$ with $f(Y^+)=0$, so $Y$ cannot have $f(Y)=1$.

(ii) $a$ and $b$ are not in $Y$. However, adding $a$ and $b$ and their $5$ edges $aw,ac,bc,bw,ab$ to $Y$ creates $Y^+$ with $f(Y^+)=0$.

(iii) Just $b$ (or $a$) is in $Y$. However, adding $a$ and the $3$ edges $ac,aw,ab$ creates $Y^+$ with $f(Y^+)=0$.

Thus this $4$-cycle contraction is admissible.

Suppose now that the edge $wc$ is present. Then $G$ contains a copy of $K_5\backslash e$ supported by the vertices $v,w,a,b,c$. Moreover assume that this is true for every degree 3 vertex, that is, there is no admissible inverse Henneberg 2, admissible $K_4$-to-vertex or admissible $4$-cycle contraction.
We show that if $G$ is not isomorphic to $K_5\backslash e$ then there is an inverse edge-joining move. This completes the proof of the theorem.

We argue as  in Lemma 4.10 of \cite{N&O}. Let $Y=\{Y_1,\dots ,Y_n\}$ be the subgraphs which are copies of $K_5\backslash e$. They are necessarily vertex disjoint since $f(Y_i\cup Y_j)=2-f(Y_i\cap Y_j)$ and every proper subgraph $X$ of $K_5\backslash e$ has $f(X) \geq 2$.
Let $V_0$ and $E_0$ be the sets of edges in $G$ which are in none of the $Y_i$. Then
\[
f(G)=\sum_{i=1}^nf(Y_i)+2|V_0|-|E_0|
\]
so $|E_0|=2|V_0|+n-1$. Since every vertex of degree less than 4 is contained in some $Y_i$, each vertex in $V_0$ is incident to at least $4$ edges.
If every $Y_i$ is incident to at least $2$ edges in $E_0$ then there are at least $4|V_0|+2n$ edge/vertex incidences in $E_0$. This
implies $|E_0|\geq 2|V_0|+n$, a contradiction. Thus
either there is a copy $Y_i$ with no incidences, which would imply $G=Y_i$, since $G$ is connected, contrary to our assumption, or there is a copy with one incidence. In this  case there is an inverse edge joining move, as desired.
\end{proof}

As mentioned in the introduction, a simpler version of the same proof scheme gives a construction for $(2,2)$-tight graphs as follows.

\begin{thm}\label{t:22characterisation} 
A simple graph $G$ is $(2,2)$-tight if and only if it can be obtained from $K_1$ by the sequential application of
moves of $4$ types, namely the Henneberg $1$ and $2$ moves, the vertex-to-$K_4$ move and the vertex-to-$4$-cycle move.
\end{thm}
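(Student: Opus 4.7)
The plan is to mimic the inductive scheme of the proof of Theorem \ref{t:21characterisation}, starting at the base graph $K_1$ and omitting the edge joining case. Necessity is a routine verification that each of the four moves preserves simplicity and the $(2,2)$-tight count. For sufficiency I would argue by induction on $|V(G)|$: assuming $G$ is simple and $(2,2)$-tight with $|V|>1$, I will exhibit an admissible inverse move producing a smaller simple $(2,2)$-tight graph.

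First, I would observe that $(2,2)$-sparsity forces the minimum degree of $G$ to be at least $2$ (an isolated or degree one vertex immediately violates sparsity of $G \setminus \{v\}$), while the tight count gives average degree below $4$. So there is a vertex $v$ of degree $2$ or $3$. Degree $2$ yields an immediate inverse Henneberg $1$ move. For degree $3$ with neighbours $a,b,c$, if $\{v,a,b,c\}$ does not induce a $K_4$ then the standard Henneberg counting argument (see \cite[Lemma $3.1$]{N&O}) delivers an admissible inverse Henneberg $2$ move.

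The substantive case is when $v$ lies in a $K_4$ on $\{v,a,b,c\}$. I would first attempt the $K_4$-to-vertex contraction; this fails to be admissible only when some external vertex $w$ has at least two edges into $\{a,b,c\}$, say $wa,wb\in E$. But then the $5$-vertex subgraph on $\{v,w,a,b,c\}$ already carries the $8$ edges $va,vb,vc,ab,ac,bc,wa,wb$, and $(2,2)$-sparsity forbids a $9$th, so $wc\notin E$ and $vw\notin E$. This is precisely where the proof simplifies relative to the $(2,1)$-tight setting: the $K_5\setminus e$ configuration that forced the edge joining move in Theorem \ref{t:21characterisation} cannot occur in a $(2,2)$-tight graph. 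Consequently $v$ and $w$ are non-adjacent with exactly two common neighbours $\{a,b\}$, so the $4$-cycle contraction identifying them is well-defined and introduces only the single new edge $wc$ into the resulting graph $G'$.

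The main technical step is verifying that this $4$-cycle contraction preserves $(2,2)$-tightness. The tight count is immediate, and simplicity of $G'$ is clear. For sparsity, I would reduce to showing that every subgraph $Y\subseteq G$ with $w,c\in V(Y)$ and $v\notin V(Y)$ satisfies $f(Y) := 2|V(Y)|-|E(Y)| \geq 3$, so that any subgraph of $G'$ containing the new edge $wc$ still meets $f \geq 2$. Suppose instead $f(Y)\leq 2$; augment $Y$ by reintroducing some of $\{v,a,b\}$ together with their incident edges from $G$, splitting into three cases according to $|V(Y)\cap\{a,b\}| \in \{0,1,2\}$. A short calculation shows each augmentation decreases $f$ by at least one, yielding a subgraph $Y^+ \subseteq G$ with $f(Y^+)<2$, contradicting $(2,2)$-sparsity of $G$. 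This case analysis mirrors the three-case argument used for Theorem \ref{t:21characterisation}, shifted up by one in the sparsity target. Induction then completes the derivation. The principal obstacle is exactly this case analysis; the structural step of ruling out a $K_5\setminus e$ obstruction, which was the main difficulty in the $(2,1)$-tight proof, comes for free in the present setting.
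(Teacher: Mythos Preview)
Your proposal is correct and follows essentially the same approach as the paper's own proof: reduce to a degree $2$ or $3$ vertex, handle the non-$K_4$ degree $3$ case via the Henneberg counting lemma, and in the $K_4$ case observe that $(2,2)$-sparsity on the five vertices $\{v,w,a,b,c\}$ forces $wc\notin E$, after which the same three-case augmentation argument (shifted by one in the sparsity target) certifies the $4$-cycle contraction. The paper's proof is terser, simply invoking ``the argument above'' from the $(2,1)$-tight proof, but the content is identical.
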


\begin{proof}
Suppose that $G$ is simple and $(2,2)$-tight with at least one edge.
Then, by simple counting, there exist vertices of degree $2$ or $3$. As in the last proof, if there are no inverse Henneberg moves then it
follows that the minimum degree is $3$ and any such vertex $v$ must lie in a copy of $K_4$. As above if there is no admissible contraction on this $K_4$ then there is a vertex $w$ adjacent to two vertices $a,b$ in that copy of $K_4$. Since $G$ is $(2,2)$-sparse there is no edge $wc$, with $c$ the final $K_4$ vertex, and hence, the argument above, shows there is an admissible $4$-cycle contraction move.
\end{proof}

\section{Henneberg moves on frameworks on surfaces}
\label{s:hennebergmoves}

The role played by the Henneberg moves in rigidity theory is extensive and well studied  (see for example \cite{B&J,Hen,J&J,Lam,Tay2,whi-book})
and we now consider such moves on frameworks on surfaces.
The case of the Henneberg $1$ move is elementary. Recall that an isostatic framework is one which is independent and infinitesimally rigid.

\begin{lem}\label{Hen1}
Let $(G,p)$ be a generic framework on an algebraic surface $\M$, let $G'$ be a graph
obtained from $G$  by  a Henneberg $1$ move with new vertex $v$, and let $p'=(p,p_v)$ be  generic. Then $(G,p)$ is isostatic on $\M$ if and only if $(G',p')$ is isostatic on $\M$.
\end{lem}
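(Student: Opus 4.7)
My plan is to reduce the claim to a rank-and-count comparison via a natural bijection between the kernels of the two rigidity matrices. First I would observe that $R_\M(G',p')$ has exactly three more rows than $R_\M(G,p)$---the two edge rows for $vv_1, vv_2$ and the normal row at $v$---and three more columns, namely the coordinate triple for $p_v$. The entries of these new rows in the new columns form the $3\times 3$ block
\[
A = \begin{pmatrix} (p_v-p_{v_1})^T \\ (p_v-p_{v_2})^T \\ N(p_v)^T \end{pmatrix},
\]
and every other entry of a new row lies in the old columns for $p_{v_1}$ or $p_{v_2}$.

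The key step is to establish that $A$ is invertible for generic $p'=(p,p_v)$. Up to sign, $\det A = (p_v-p_{v_1})\cdot\bigl((p_v-p_{v_2})\times N(p_v)\bigr)$, which is a polynomial function on $\M$ (with $p_{v_1}, p_{v_2}$ held fixed) that vanishes only when the two edge directions are parallel or when $N(p_v)$ lies in the plane they span. Each of these is a proper Zariski-closed condition on the irreducible algebraic surface $\M$---for instance, failing at any $p_v\in\M$ that is off the line through $p_{v_1}, p_{v_2}$ and whose tangent plane is not orthogonal to that triangle---so $\det A\neq 0$ for generic $p_v$. I expect this non-degeneracy check to be the only real obstacle, and it is routine.

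Given the invertibility of $A$, the restriction map $u'\mapsto u'|_V$ from $\ker R_\M(G',p')$ to $\ker R_\M(G,p)$ is a linear bijection, since any flex $u$ of $(G,p)$ on $\M$ extends uniquely to a flex $(u,u_v)$ of $(G',p')$ by taking $u_v$ to be the unique solution of
\[
A\, u_v = \bigl(u_{v_1}\cdot(p_v-p_{v_1}),\ u_{v_2}\cdot(p_v-p_{v_2}),\ 0\bigr)^T.
\]
It follows that $\rank R_\M(G',p')=\rank R_\M(G,p)+3$. Combining this with the identities $3|V'|-k=(3|V|-k)+3$ and $2|V'|-|E'|=2|V|-|E|$, both conditions of Theorem \ref{isostaticnecessity} transfer in either direction, yielding the claimed equivalence.
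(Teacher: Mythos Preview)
Your proof is correct and follows essentially the same approach as the paper: both arguments exploit the block structure of $R_\M(G',p')$ with the three new rows and columns, observing that the old columns are zero in the new rows (equivalently, the new columns are zero in the old rows) and that the $3\times 3$ block is nonsingular at a generic $p_v$. The paper phrases this via row independence in a block upper-triangular matrix, while you phrase it dually via a bijection of kernels and then invoke Theorem~\ref{isostaticnecessity}; your version is more explicit about the nonsingularity of $A$, but the underlying idea is the same.
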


\begin{proof} The rigidity matrix
$R_\M(G',p')$ contains $3$ new rows and $3$ new columns and those columns are zero everywhere
in the $|E|+|V|$ rows for $G$.
Reorder the rows and columns so that the first three rows and columns are the new ones.
By the generic location of $p_v$ and the block upper triangular structure the first three
rows are independent of the rest. Thus
there is a row dependency in $R_\M(G',p')$ if and only if there is a row dependency in $R_\M(G,p)$.
\end{proof}

The preservation of independence and isostaticity under the Henneberg $2$ move is considerably more subtle. To see an aspect of this
let $(G,p)$ be generic on $\M$, where $\M$ is the cylinder surface $x^2+y^2=1$ in $\bR^3$. Examining
the form of $R_\M(G,p)$, it is evident that the addition of a degree $0$ vertex
increases the rank by $1$.
Elementary linear algebra also shows that the
addition of a degree $1$ framework point $(x_1,y_1,z_1)$ incident to a point $(x,y,z)$ increases the rank by $2$ if and only if
$(x_1,y_1,z_1)$ is not equal to $(x,y,z)$ or $(-x,-y,z)$.
More surprisingly, similar considerations show that there are only four possible points where the rank does not fully increase for a new degree $2$ vertex. This is contrary to the situation in the plane where any point on a line through the existing edge will create a copy of $K_3$ whose rows give a minimal linear dependency.
In view of this, adapting a typical Henneberg $2$ argument for rigidity preservation would require placing the new degree $3$ vertex at one of a finite number of points.
However, the dependencies created by each of these points are not amenable to such an argument.

Such difficulties  motivated us in \cite[Section $4$]{NOP}, to show that continuous rigidity was preserved.
The proof there makes particular use of the two trivial motions of the cylinder and it is not clear how one might generalise this method
to surfaces with fewer trivial
motions. Our first new approach below is based on the convergence of specialised frameworks. A direct algebraic proof is given in Section \ref{sec:algh2}.

The following notation and observations will be useful.

Let $\T_{q_i}$ denote the tangent space for the point $q_i$
on the surface $\M$ and for a framework vector $q=(q_1,q_2,\dots ,q_n)$ let
\[
\T_q =\T_{q_1}\oplus \T_{q_2} \oplus \dots \oplus \T_{q_n}\subseteq \bR^3 \oplus \dots \oplus \bR^3 = \bR^{3n}
 \]
denote the joint tangent space. This is the space of infinitesimal velocities
of any framework with framework vector $q$.
With the usual Euclidean structure we have
orthogonal projections
\[
P_q: \bR^{3n} \to \T_q,
\mbox{   }
F_q: \bR^{3n} \to \F(G,q), \mbox{   }
 Q_q: \bR^{3n} \to \R_q,
 \]
where $\F(G,q)$ is the vector space of infinitesimal flexes of the framework $(G,q)$ and where $\R_q$ is the subspace of $\T_q$ consisting of rigid motion infinitesimal flexes.
We have $\R_q = \ker R_\M(K_n, q)$ for $n\geq 6-k$, where $k$ is the type of $\M$.
%sufficiently large $n$. Note $n\geq 4$ suffices for a surface of type 2 and $n\geq 5$ suffices for a surface of type 1.
Since $\M$ is smooth the function $q \to P_q$ is continuous in a neighbourhood of $p$. This also holds  in the case of a \textit{degenerate framework vector} $p$, in the sense that some or all of the vectors $p_i$ may agree.
Similarly, if the spaces $\R_q$ have dimension $k$ throughout a neighbourhood
of a (possibly degenerate) framework vector $p$ then the function $q \to Q_q$ is continuous, as long as the degeneracy includes three non-collinear points.
In general the function $q \to \F(G,q)$ is lower semi-continuous.

\begin{lem}\label{hen2independent}
Let $G$ be a simple graph, let $G'$ be derived from $G$ by a Henneberg 2  move and let $\M$ be an irreducible surface. If $G$  is minimally infinitesimally rigid  on $\M$ then  $G'$ is  minimally infinitesimally rigid  on $\M$.
\end{lem}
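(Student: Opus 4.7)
The plan is to reduce the statement via Theorem \ref{isostaticnecessity} to a rank condition on $R_\M(G',p')$, and then to establish it by an infinitesimal-flex extension analysis supported by a convergence argument.

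First observe that a Henneberg $2$ move adds one vertex and, net, two edges, so $2|V'|-|E'| = 2|V|-|E| = k$. By Theorem \ref{isostaticnecessity}, to show that $(G',p')$ is isostatic on $\M$ for generic $p'$ it suffices to prove that $\rank R_\M(G',p') = 3|V'|-k$. Since $|E'|+|V'| = 3|V'|-k$, this is equivalent to full row rank of $R_\M(G',p')$, or equivalently to $\dim \ker R_\M(G',p')\le k$ (it is always at least $k$, coming from the tangential rigid motions of $\M$).

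If $u$ is any infinitesimal flex of $(G',p')$ then its restriction $u|_V$ to the original vertices is an infinitesimal flex of $(G\setminus\{v_1v_2\},p)$. Since $(G,p)$ is isostatic, $\ker R_\M(G,p)$ is $k$-dimensional, and removing the (independent) row for $v_1v_2$ increases the kernel by exactly one dimension, so the space of such restrictions $u|_V$ is $(k+1)$-dimensional. Conversely, a given $u|_V$ extends to a flex of $(G',p')$ precisely when there is $u_v \in \T_{p_v}$ satisfying the three edge equations $u_v\cdot(p_v-p_{v_i}) = u_{v_i}\cdot(p_v-p_{v_i})$ for $i=1,2,3$. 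The three vectors $p_v-p_{v_i}$ project to three vectors in the $2$-dimensional tangent space $\T_{p_v}$, hence admit a nontrivial linear relation $\sum_i \lambda_i P_v(p_v-p_{v_i}) = 0$, where $P_v$ denotes orthogonal projection onto $\T_{p_v}$. The system for $u_v$ is therefore solvable precisely when the single linear compatibility condition $\sum_i \lambda_i\, u_{v_i}\cdot(p_v-p_{v_i}) = 0$ holds on $u|_V$. Rigid-motion flexes always extend (by evaluating the ambient motion at $p_v$), so this condition is automatically satisfied on the $k$-dimensional subspace of trivial flexes. Provided it is nontrivial on the full $(k+1)$-dimensional space, it cuts that space down to exactly the $k$-dimensional subspace of extending flexes, giving $\dim \ker R_\M(G',p') = k$ and hence isostaticity.

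The main obstacle is therefore to verify that this compatibility condition is nontrivial for generic $(p,p_v)$ on $\M$. This is where the convergence argument enters. The plan is to construct a sequence of generic realisations $(G',p^n)$ of $G'$ on $\M$, with common restriction to $V$ equal to a fixed generic isostatic $(G,p)$ and with $p^n_v\to p_{v_3}$ along a smooth curve in $\M$ whose initial tangent at $p_{v_3}$ is linearly independent, within $\T_{p_{v_3}}$, from the projections of $p_{v_3}-p_{v_1}$ and $p_{v_3}-p_{v_2}$. The limit is a degenerate non-generic realisation of $G'$ whose structure naturally covers the generic realisation $(G,p)$ via the identification of $v$ with $v_3$ and corresponding column operations on the rigidity matrix. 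Analysing the rigidity matrix along this sequence and at its limit, one verifies that the compatibility condition cannot vanish identically. By algebraic genericity and the lower semicontinuity of the rank of $R_\M(G',\cdot)$ on $\M^{|V'|}$, this yields $\rank R_\M(G',p') = 3|V'|-k$ at every generic $p'$, completing the proof.
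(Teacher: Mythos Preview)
Your overall strategy---reduce via Theorem~\ref{isostaticnecessity} to a rank statement, recast this as a compatibility condition for extending flexes of $(G\setminus v_1v_2,p)$, and then verify nontriviality of that condition by a convergence-to-degenerate-realisation argument---is exactly the paper's approach in spirit, and your linear-algebraic framing of the extension problem is essentially the viewpoint of Section~\ref{sec:algh2}.

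However, the convergence step as written has a genuine gap. Sending $p_v^n\to p_{v_3}$ does \emph{not} produce a degenerate realisation that ``covers'' $(G,p)$: identifying $v$ with $v_3$ in $G'$ yields the graph $(G\setminus v_1v_2)\cup\{v_3v_1,v_3v_2\}$, not $G$, and in particular the removed edge $v_1v_2$ never reappears. No amount of column manipulation on the rigidity matrix recovers the row for $v_1v_2$ from this limit. The paper's argument instead sends $p_v^k\to p_{v_2}$ (an endpoint of the deleted edge) along a specific tangent direction $a$ at $p_{v_2}$, namely the one orthogonal to the tangent vector $b$ that is itself orthogonal to $p_{v_2}-p_{v_1}$. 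This choice is the crux: the degenerating bar $vv_2$ forces $(u_v-u_{v_2})\cdot a=0$ in the limit, and since $u_v-u_{v_2}$ is tangential this means it is a multiple of $b$, hence orthogonal to $p_{v_2}-p_{v_1}$. Combined with the surviving constraint from the bar $vv_1$, one deduces $(u_{v_2}-u_{v_1})\cdot(p_{v_2}-p_{v_1})=0$, i.e.\ precisely the missing edge-$v_1v_2$ condition. That is what allows one to invoke rigidity of $(G,p)$ and reach a contradiction. With the limit taken at $p_{v_3}$ this mechanism is unavailable.

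Two smaller points. First, in a $2$-dimensional tangent space the projections of two generic vectors $p_{v_3}-p_{v_1}$ and $p_{v_3}-p_{v_2}$ already span $\T_{p_{v_3}}$, so no nonzero direction is ``linearly independent from'' both in the sense of lying outside their span; your tangent-direction condition needs reformulation. Second, the sentence ``one verifies that the compatibility condition cannot vanish identically'' is where the entire content of the lemma resides and must be carried out explicitly; doing so is exactly what forces the correct choice of limit point and approach direction described above.
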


\begin{proof}Let $(G,p)$ be generic on $\M$ with $p=(p_1,\dots,p_n)$ and let $p'=(p_0,p)$
where $(G',p')$ is generic on $\M$. We let  $v_1v_2$ denote the edge involved in the Henneberg move and write $v_0$ for the new vertex. 
Suppose that $(G',p')$ is not infinitesimally rigid on $\M$. Then it follows that every specialised framework on $\M$ with graph $G'$ is infinitesimally flexible. Figure
\ref{Henn2new} indicates a sequence of specialisations $(G', p^k)$ in which only the $p_0$ framework
point is specialised to a point $p_0^k$. Also  $p_0^k$ tends to $p_2$ 
in the direction $a$ where $a$ is a tangent vector at $p_2$ which is orthogonal to a tangent vector $b$ at $p_2$ where $b$ is orthogonal to $p_2-p_1$. 
More precisely, the normalised vector $(p_2^k-p_0)/\|p_2^k-p_0\|$ converges to $a$, as $k\to \infty$. Each of the frameworks  $(G', p^k)$ has a unit norm flex $u^k$ which is orthogonal to the space of rigid motion infinitesimal flexes of its framework. By the Bolzano-Weierstrass theorem there is a subsequence of the sequence $u^k$ which converges to a vector, $u^\infty$ say, of unit norm. Discarding framework points and relabelling we may assume this holds for the original sequence. The limit flex of the degenerate framework $(G',p^\infty)$ has the form
\[
u^\infty=(u_0^\infty,u_1,u_2,\dots ,u_n),
\]
while the degenerate framework vector is
\[
p^\infty =(p^\infty_0,p_1,p_2,p_3,\dots ,p_n),
\]
where $p^\infty_0 = p_2$.

We claim that the velocities $u_1, u_2$ give an infinitesimal flex of $p_1p_2$
(as a single edge framework on $\M)$. To see this note that in view of the
well-behaved convergence of $p_0^k$ to $p_2$ (in the $a$ direction) it follows that the velocities $u_2$ and $u_0^\infty$ have the same component in the $a$ direction, and so $(u_2 -u_0^\infty).a=0$. Since $u_2 -u_0^\infty$ is tangential to $\M$ it follows from the choice of $a$ that $u_2 -u_0^\infty$ is orthogonal to $p_2-p_1$. On the other hand
$u_1-u^\infty_0$ is orthogonal to $p_2-p_1$ and so taking differences
$u_2 -u_1$  is orthogonal to $p_2-p_1$ as desired.

It now follows, by the rigidity of $(G,p)$ that the velocity vector
\[
u^\infty_{\rm res}=(u_1,u_2,\dots ,u_n),
\]  
is a rigid motion flex. However $u^\infty_0$ is determined by $u_1$ and $u_3$, since the bars for the pairs $p_0^\infty, p_1$ and $p_0^\infty, p_3$ are present. Thus $u^\infty_0$  agrees with $u_2$ and  $u^\infty$  is a rigid motion flex for $(G', p^\infty)$.
This is a contradiction since the flex has unit norm and is orthogonal to the rigid motion flexes.
\end{proof}

\begin{center}
\begin{figure}[ht]
\centering
\includegraphics[width=8cm]{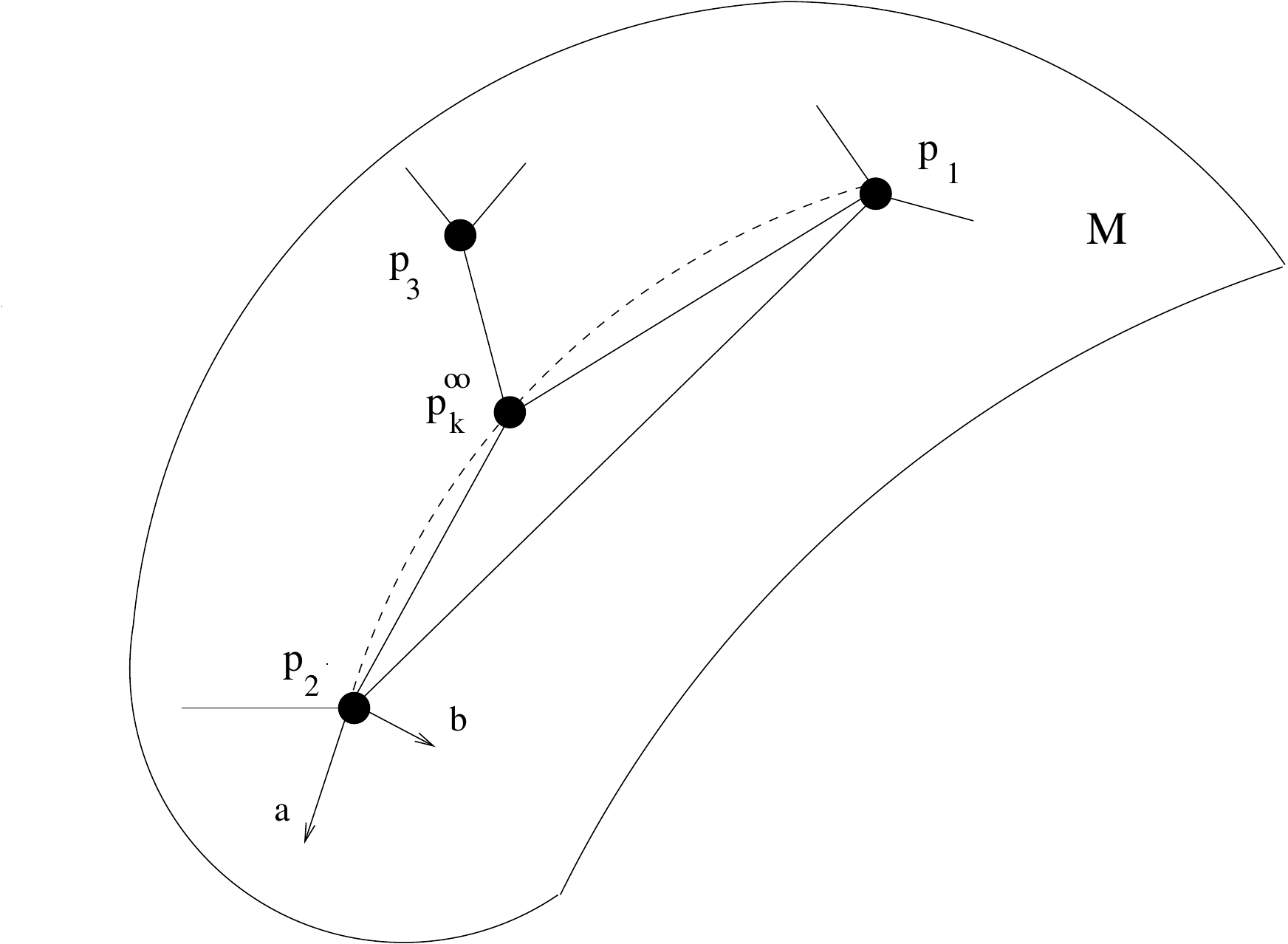}
\caption{Geometry for the sequential specialisation proof of rigidity preservation under the Henneberg $2$ move.}
\label{Henn2new}
\end{figure}
\end{center}

\section{Vertex surgery moves on frameworks}
\label{vertexmoves}

Let $\M$ be an irreducible surface of type $2,1$ or $0$. We now show the preservation of independence of $G$ on $\M$ under the vertex splitting move, the vertex-to-$K_4$ move and the vertex-to-$4$-cycle move.
The form of argument in the proofs is in terms of
flexibility preservation for the inverse move and this is obtained once again by the consideration
of certain well-behaved contraction sequences $(G', p^k) \to (G', p^\infty)$.

For completeness and comparison we include Lemma 5.1 although it will not be needed for the proof of the main theorem.

\subsection{Vertex Splitting}
\label{vertexsplitsec}
Recall that in a vertex-splitting move $G\to G'$
a vertex $v$ is doubled to $v_1$ and $v_2$ with a doubling of an edge $vu$ to
$v_1u, v_2u$, with the remaining edges to $v$ distributed arbitrarily
between $v_1$ and $v_2$.
The reverse move $G'\to G$ is an admissible triangle contraction
in the sense that the result of collapsing an edge (of a triangle) to coincident endpoints
results in a simple $(2,k)$-tight graph.

The preservation of independence under vertex splitting for bar-joint frameworks in various dimensions
was shown by Whiteley  \cite{whi-vertexsplit}. The arguments there were  based on self-stresses and $k$-frames. Here we argue somewhat more directly in terms of infinitesimal flexes.

\begin{lem}\label{Vertexsplitprop}
Let $\M$ be an irreducible surface of type $2, 1$ or $0$
and let $G \to G'$ be a vertex splitting move. If $G$ is independent on $\M$ then
$G'$ is independent on $\M$.
\end{lem}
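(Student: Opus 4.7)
The plan is to prove independence of $(G',p')$ by a limiting degeneration argument in the spirit of Lemma \ref{hen2independent}. Suppose for contradiction that $(G',p')$ is dependent for a generic realisation $p'$, i.e., the generic value of $\dim\ker R_\M(G',p')$ strictly exceeds the independence value $2|V'|-|E'|=2|V|-|E|$. Fix a generic independent realisation $(G,p)$ on $\M$, with $p_v$ the position of the vertex $v$ to be split, and let $uv$ be the edge that is duplicated. I would construct a sequence of specialised realisations $(G',p^k)$ with $p^k_w=p_w$ for every $w\neq v_1,v_2$, while $p^k_{v_1},p^k_{v_2}\in\M$ converge to $p_v$ in a \textit{well-behaved} way: that is, $(p^k_{v_1}-p^k_{v_2})/\|p^k_{v_1}-p^k_{v_2}\|\to a$ for some tangent vector $a\in\T_{p_v}$ chosen later. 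By upper semicontinuity of kernel dimension, $\dim\ker R_\M(G',p^k)\geq \dim\ker R_\M(G',p')=:N\geq 2|V|-|E|+1$ for all large $k$.

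Selecting an orthonormal basis of $\ker R_\M(G',p^k)$ for each $k$ and extracting a subsequence convergent in the Grassmannian (equivalently, applying Bolzano--Weierstrass to the unit basis vectors), I would obtain $N$ orthonormal limit flexes $u^{\infty,1},\dots,u^{\infty,N}$ of the degenerate framework $(G',p^\infty)$. The decisive step is to show that every such limit flex $u^\infty$ satisfies $u^\infty_{v_1}=u^\infty_{v_2}$. Dividing the flex condition on the edge $v_1v_2$ by $\|p^k_{v_1}-p^k_{v_2}\|$ and passing to the limit gives $(u^\infty_{v_1}-u^\infty_{v_2})\cdot a=0$. Since $uv$ is duplicated, both $uv_1$ and $uv_2$ lie in $E(G')$, and their limit flex conditions $(u^\infty_u-u^\infty_{v_i})\cdot(p_u-p_v)=0$ for $i=1,2$ subtract to give $(u^\infty_{v_1}-u^\infty_{v_2})\cdot(p_u-p_v)=0$. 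The difference $u^\infty_{v_1}-u^\infty_{v_2}$ already lies in the two-dimensional tangent space $\T_{p_v}$, and the two orthogonality conditions force it to vanish, provided $a$ and the tangential projection of $p_u-p_v$ span $\T_{p_v}$.

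Granting $u^\infty_{v_1}=u^\infty_{v_2}$, I would define $u$ on $V(G)$ by $u_v:=u^\infty_{v_1}$ and $u_w:=u^\infty_w$ for $w\neq v_1,v_2$. Each edge $xy\in E(G)$ either survives into $G'$ unchanged or corresponds to an edge $xv_1$ or $xv_2$ of $G'$; in either case the limit flex condition reduces to $(u_x-u_y)\cdot(p_x-p_y)=0$. Tangentiality at $p_v$ passes through by continuity of the normal map on the smooth surface. Hence $u$ is an infinitesimal flex of $(G,p)$ on $\M$. The projection $u^\infty\mapsto u$, restricted to the subspace $\{u^\infty:u^\infty_{v_1}=u^\infty_{v_2}\}$, has trivial kernel, so the $N$ projected flexes remain linearly independent. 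This yields $\dim\ker R_\M(G,p)\geq N\geq 2|V|-|E|+1$, contradicting the independence of $(G,p)$.

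The main obstacle will be the transversality ensuring that $a$ and the tangential projection $\pi_{p_v}(p_u-p_v)$ span $\T_{p_v}$. This first requires that $p_u-p_v$ is not normal to $\M$ at $p_v$, which is a polynomial condition on the coordinates of $p$ that is avoided generically, and then that the direction $a$ can be chosen transverse to this one-dimensional tangential subspace. Since $a$ is controlled by the shape of the degeneration, this selection is straightforward once the generic nondegeneracy is secured, and it is independent of the surface type $k\in\{0,1,2\}$ since only a two-dimensional tangent space is involved.
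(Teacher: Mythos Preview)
Your approach is essentially the paper's: both argue by contradiction via a degeneration sequence $(G',p^k)$ in which the two split vertices coalesce at $p_v$ along a well-chosen tangential direction $a$, and both use the identical key step---orthogonality of $u^\infty_{v_1}-u^\infty_{v_2}\in\T_{p_v}$ to both $a$ and $p_u-p_v$ forces it to vanish---to conclude that the limit flex descends to a flex of $(G,p)$.

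The only noteworthy difference is in bookkeeping. The paper extracts a \emph{single} unit flex $u^k$ orthogonal to the rigid motion subspace $\R_{p^k}$, and then invokes continuity of the projections $Q_{p^k}\to Q_{p^\infty}$ to conclude that the descended flex is proper. You instead carry an orthonormal $N$-tuple from $\ker R_\M(G',p^k)$ through the limit and finish by a direct dimension count, $\dim\ker R_\M(G,p)\geq N>2|V|-|E|$. Your variant has the mild advantage that it never mentions $\R_p$ or its continuity, and it matches the lemma's hypothesis (independence) without implicitly identifying dependence with infinitesimal flexibility; the paper's phrasing ``infinitesimally flexible'' tacitly uses $2|V'|-|E'|\geq k$, which holds in the intended $(2,k)$-tight application but is not part of the stated hypothesis. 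Two cosmetic points: what you call ``upper semicontinuity of kernel dimension'' is really the fact that the \emph{generic} rank is maximal (so any specialisation has nullity $\geq N$); and since $\dim\ker R_\M(G',p^k)$ may exceed $N$, you want an orthonormal $N$-frame in the kernel rather than a full basis.
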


\begin{proof}
To fix notation, let $G$ have $n-1$ vertices $v, v_3, v_4,\dots ,v_n$
with vertex $v$ to be split and $v_3$ featuring as the vertex $u$ in the vertex-splitting move.
Let $p,p'$ be generic vectors for $G, G'$ respectively, with $p=(p(v),p(v_3),\dots ,p(v_n))= (p_1,p_3,\dots ,p_n)$ and
$p'=(p(v_1)',p(v_2)', \dots , p(v_n)')=(p_1,p_2,\dots ,p_n)$.

It will be sufficient to show that if  $G'$ is dependent on $\M$ then so too is $G$. Accordingly we assume that $(G',p')$ is a generic framework which is infinitesimally flexible.
Let $p_2^k$ be a sequence of generic points on $\M$ which converges to $p_1$ in the following well-behaved manner, namely the unit vectors $a_k$ in the directions $p_2^k-p_1$ converge to a unit vector $a$ with $a. (p_3-p_1)\neq 0$.
Also, let $p^k=(p_1, p_2^k, p_3, \dots ,p_n)$ and let $p^\infty = (p_1, p_1, p_3, \dots , p_n)$. By the assumption, for each $k$ there exists a unit vector
$
u^k=(u_1^k, \dots ,u_n^k)
$
in $\T_{p^k}$ which is an infinitesimal flex of $(G', p^k)$ and which has no rigid motion flex component, in the sense that $Q_{p^k}u^k=0$. Furthermore, taking subsequences, we may assume that $u^k$ converges to some unit norm flex $u'=(u_1,\dots ,u_n)$ as $k$ tends to infinity. By the flex condition we have
\[
u_1.(p_1-p_3)=u_3.(p_1-p_3)
\]
and
\[
u_2.(p_1-p_3)=\lim_{k\to \infty}u_2^k.(p_2^k-p_3)=\lim_{k\to \infty}u_3^k.(p_2^k-p_3)=u_3.(p_1-p_3).
\]
On the other hand, since $a_k$ is a scalar multiple of $p_2^k-p_1$,
\[
(u_2-u_1). a= \lim_{k\to \infty}(u_2^k-u_1^k). a_k =
 \lim_{k\to \infty}(u_2^k-u_1^k). (p_2^k-p_1) =0.
\]
Thus $u_1=u_2$ since both vectors lie in the same tangent plane and their difference is orthogonal to
the linearly independent vectors $a$ and $p_1-p_2$.

Note that there is always a natural injective map
\[
\iota : \F(G,p) \to  \F(G',p^\infty),
\]
provided by the map $(w_1, w_3, w_4, \dots , w_n) \to (w_1, w_1, w_3, w_4, \dots ,w_n)$.
Also a flex of $(G',p^\infty)$ of the form $(w_1, w_1, w_3, w_4, \dots ,w_n)$  determines the flex $(w_1, w_3, w_4, \dots ,w_n)$ of $(G, p)$. Thus the flex $u'$ gives rise to 
a flex, $u$ say, of $(G, p)$. From the continuity of $q \to Q_q$ it follows  that $Q_{p^\infty}u'=0$. Thus $Q_pu=0$ and so $u$ is a proper flex and $G$ is dependent, as required.
\end{proof}

\subsection{Vertex-to-$K_4$ move.}
\label{vtok4secsub}
We now show preservation of independence under the vertex-to-$K_4$ move $G\to G'$. Once again we will argue in terms of a  \textit{well-behaved contraction} $(G', p^k) \to (G', p^\infty)$, this time  with respect to a particular sequence of  subframeworks $(K_4,(p_1,p_2^k,p_3^k ,p_4^k))$
with $p_i^k\to p_1$ as $k\to \infty$, for each $i=2,3,4$.

First we consider separately the case of a cylinder since here we may
exploit the two trivial infinitesimal motions to obtain a simple direct proof.

\bigskip

\begin{lem}\label{l:vK4lemma}
Let $\M$ be the cylinder, let $G'$ be  $(2,2)$-sparse with proper subgraph $K_4$ and
let $p=(p_1, \dots ,p_n)$ be a framework vector in $\M^n$ for $G'$, with $p_1,\dots , p_4$ the placement of the vertices for $K_4$.
Let
\[
p^k=(p_1,p_2^k, p_3^k, p_4^k, p_5, \dots ,p_n)
\]
be a sequence in $\M^n$ with $p_i^k \to p_1$ as $ k\to \infty$, for $i=2,3,4$,
and suppose that

(i) each framework $(K_4,(p_1,p_2^k,p_3^k ,p_4^k))$ has $2$ independent infinitesimal flexes on $\M$,

(ii) the dimension of the infinitesimal flex space of $(G',p^k)$ is greater than $2$ for each $k$.

Then there is a unit norm infinitesimal flex $u'$ of $(G',p^\infty)$ of the form
\[
u'=(0, 0, 0, 0, u_5,\dots ,u_n).
\]
\end{lem}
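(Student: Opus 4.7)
The idea is to produce the required flex as a rescaled limit $v = \lim v^k$, where $v^k$ is obtained from a unit norm flex $u^k$ of $(G', p^k)$ by subtracting off the unique cylinder isometry that matches $u^k$ on the $K_4$ subframework. To set this up, recall that on the cylinder the axial translation $T$ and axial rotation $R$ span a two-dimensional space of rigid motion flexes, so $\dim \R_{p^k} = 2$ for every $k$. Condition (ii) therefore yields, for each $k$, a unit norm flex $u^k$ of $(G', p^k)$ satisfying $u^k \perp \R_{p^k}$ (equivalently $Q_{p^k} u^k = 0$).

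The key step uses (i). Since $|E(K_4)| = 6 = 2|V(K_4)| - 2$, condition (i) forces each $K_4$ subframework at $(p_1, p_2^k, p_3^k, p_4^k)$ to be isostatic on $\M$, so its entire two-dimensional flex space coincides with the restriction of the cylinder rigid motions to those four points. Hence there exist unique scalars $\alpha_k, \beta_k$ with
\[
u^k|_{K_4} = (\alpha_k T + \beta_k R)|_{K_4}.
\]
Set $\tilde u^k := u^k - \alpha_k T - \beta_k R$. This is a flex of $(G', p^k)$ whose first four vertex components vanish by construction, and it is nonzero: otherwise $u^k \in \R_{p^k}$, contradicting $u^k \perp \R_{p^k}$ and $\|u^k\| = 1$.

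Finally, normalise $v^k := \tilde u^k / \|\tilde u^k\|$ and use Bolzano--Weierstrass in $\bR^{3n}$ to extract a subsequence converging to some unit norm vector $v$. Since the first four components of each $v^k$ vanish, the same holds for $v$, giving the required form $u' = (0,0,0,0,u_5,\dots,u_n)$. The flex and tangentiality equations for $(G', p^\infty)$ are inherited by continuity: equations on $K_4$-internal edges become $0 = 0$ in the limit (both velocity differences and coordinate differences vanish), edges joining $K_4$ to an exterior vertex $v_j$ yield $0 = v_j \cdot (p_1 - p_j)$, while equations among exterior edges and tangentiality conditions at exterior vertices carry over unchanged. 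The main subtlety is the subtraction step, which requires the $K_4$ flex space to be exactly (not merely at least) two-dimensional so that $u^k|_{K_4}$ actually lies in the span of cylinder isometries restricted to four points; this is precisely isostaticity of $K_4$ on the cylinder, supplied by (i) together with the count $6 = 2\cdot 4 - 2$.
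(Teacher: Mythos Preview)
Your argument is correct and follows essentially the same strategy as the paper: use isostaticity of the $K_4$ subframework on the cylinder to arrange that the first four velocity components vanish for each $k$, then pass to a subsequential limit. The paper's execution is marginally more direct: rather than taking $u^k\perp\R_{p^k}$, subtracting off the matching rigid motion, and renormalising, it simply notes that since $\dim\F(G',p^k)>2$ and the evaluation $u\mapsto u_1$ lands in the $2$-dimensional tangent plane at $p_1$, one can choose a unit flex with $u_1^k=0$ from the outset; then (i) forces $u_2^k=u_3^k=u_4^k=0$ immediately, because the only cylinder rigid motion vanishing at one point is zero.
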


\begin{proof}
In view of (ii) for each $k$ there exists an
 infinitesimal flex
$
u^k=(u_1^k, \dots ,u_n^k)
$
of $(G,p^k)$ in $\bR^{3n}$ with norm $\|u^k\|=1$, and
with $u_1^k=0$ for all $k$.
Thus, from (i) it follows that
$u_i^k=0$ for $i=2,3,4$.
Taking subsequences we may assume that $u^k$ converges to a unit norm flex $u'$ of $(G',p^\infty)$ and this has the desired form.
\end{proof}

\begin{cor}\label{vertex2k4cylinder}
The vertex-to-$K_4$ move on the cylinder preserves generic infinitesimal rigidity.
\end{cor}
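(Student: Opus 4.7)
The plan is to argue by contradiction using Lemma~\ref{l:vK4lemma}. Suppose that $G$ is generically minimally infinitesimally rigid on the cylinder $\M$, and that $G'$ is obtained from $G$ by a vertex-to-$K_4$ move replacing a vertex $v$ with a $K_4$ on new vertices $w_1, w_2, w_3, w_4$, where each original edge $xv$ is redistributed to some edge $xw_{i(x)}$. The move preserves $(2,2)$-tightness, so it suffices to show that the generic flex space of $(G', p')$ on $\M$ has dimension at most $2$. Assume for contradiction that this dimension is strictly greater than $2$.

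Fix a generic placement $\tilde p = (p_1, p_5, \ldots, p_n)$ of $G$ on $\M$, with $p_1$ assigned to $v$. Construct a sequence $(G', p^k)$ with
$p^k = (p_1, p_2^k, p_3^k, p_4^k, p_5, \ldots, p_n)$
where $p_i^k \to p_1$ on $\M$ for $i = 2, 3, 4$, and the four-tuples $(p_1, p_2^k, p_3^k, p_4^k)$ are in sufficiently general position that the supported $K_4$-subframework has a two-dimensional flex space on $\M$. Such a choice is available because $\M$ has type~$2$. This yields hypothesis~(i) of Lemma~\ref{l:vK4lemma}. For hypothesis~(ii) I use the upper semi-continuity of the nullity of $R_{\M}(G', \cdot)$, which forces $\dim \F(G', p^k)$ to be at least the generic flex dimension of $G'$ on $\M$, hence strictly greater than $2$.

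Lemma~\ref{l:vK4lemma} then produces a unit norm flex $u' = (0,0,0,0,u_5,\ldots,u_n)$ of the degenerate framework $(G', p^\infty)$. Under the identification $w_i \mapsto v$, each external edge $xw_{i(x)}$ of $G'$ at $p^\infty$ corresponds to the edge $xv$ of $G$ at $\tilde p$, while the six internal $K_4$ edges impose only the vacuous constraint $0 = 0$ at the zero velocities $u'_1 = \cdots = u'_4 = 0$. Hence $u := (0, u_5, \ldots, u_n)$ is an infinitesimal flex of $(G, \tilde p)$ on $\M$ with zero velocity at $v$. Generic rigidity of $G$ places $u$ in $\R_{\tilde p}$, the two-dimensional space of trivial cylinder motions spanned by axial translation and rotation about the axis. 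Both trivial motions have nonzero velocity at every point of $\M$ and are linearly independent there, so the only element of $\R_{\tilde p}$ vanishing at $p_1$ is the zero motion. Hence $u = 0$ and therefore $u' = 0$, contradicting $\|u'\| = 1$.

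The main obstacle in carrying out the plan is arranging the contracting sequence $p^k$ so that both hypotheses of Lemma~\ref{l:vK4lemma} hold simultaneously along the prescribed convergence $p_i^k \to p_1$. This reduces to the routine observation that the loci in $\M^3$ on which either hypothesis fails are proper closed subvarieties in every neighbourhood of $(p_1, p_1, p_1)$, using the rational parametrisation of the cylinder and the contradiction assumption that $(G', p')$ is generically flexible.
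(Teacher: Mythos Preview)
Your proof is correct and follows essentially the same approach as the paper's: contract the $K_4$ vertices to $p_1$ along a sequence satisfying the hypotheses of Lemma~\ref{l:vK4lemma}, obtain a unit norm flex of $(G',p^\infty)$ vanishing on the four $K_4$ vertices, and read off a nonzero flex of $(G,\tilde p)$ with zero velocity at $v$. The paper's version is terser---it simply takes the $p^k$ generic so that (i) and (ii) are immediate, and then asserts that the resulting flex with $u_1=0$ shows $G$ is dependent---whereas you spell out the semi-continuity argument for (ii) and the fact that no nonzero rigid motion of the cylinder vanishes at a point; but the logical skeleton is identical.
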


\begin{proof}
Let $G\to G'$ be a vertex-to-$K_4$ move. Suppose that $G'$ is dependent.
Let $p^k$ be a sequence of generic framework vectors for $G'$ as specified in Lemma \ref{l:vK4lemma}, with $p_i^k \to p_1$ for $i=1,\dots , 4$. Since $p^k$ is generic (i) holds, and (ii) holds since $G'$ is dependent.
Thus, by the lemma  $(G',p^\infty)$ has an infinitesimal flex of the indicated form. This in turn gives an infinitesimal flex $u$ of $(G,p)$ with
$u_1=0$ and so $G$ is dependent, as required.
\end{proof}

We now consider the much more subtle case of irreducible surfaces which are of type $1$ or $0$.
In this case we exploit the characteristic property that at a generic point $p_1$ on $\M$ the principal curvatures are distinct.

Let $(K_4,(p_1,\dots ,  p_4))$ be a bar-joint framework on the irreducible surface $\M$
and assume that the principal curvatures $\kappa_s, \kappa_t$ at $p_1$ are well-defined, with $\kappa_s \neq \kappa_t$, and that $\hat s$ and $ \hat t$ are associated orthonormal vectors in the tangent
plane at $p_1$. For definiteness let $\hat n$ be the unit normal at $p_1$
with $\hat s, \hat t,  \hat n$ a right-handed orthonormal triple. By Taylor's theorem,
in a neighbourhood of $p_1$ the points $p$ on $\M$ take the form
\begin{equation}\label{taylor}
p(s,t)=p_1+(s\hat s+t\hat t) +1/2(\kappa_s s^2+\kappa_t t^2) \hat n + r(s,t)
\end{equation}
with $\|r(s,t)\|=O(\|(s,t)\|^3)$, for $\|(s,t)\|\leq R$, say.
In particular
\[
\frac{dp}{ds}=\hat s +\kappa_ss \hat n +r_s,\mbox{  } \frac{dp}{dt}=\hat t +\kappa_tt \hat n +r_t
\]
where $\|r_s\|$ and $\|r_t\|$ are of order $\|(s,t)\|^2$.
Also the vectors
\begin{equation}\label{n(s,t)*}
n(s,t) = dp/ds(s, t) \times dp/dt(s,t)
\end{equation}
give a continuous choice of normal vectors in a neighbourhood of $p_1$, with
\[
n(s,t) =
\hat s \times \hat t + \kappa_tt\hat s \times  \hat n + \kappa_s s  \hat n\times \hat t + \ul{r}
\]
\begin{equation*}
=  \hat n -(\kappa_tt\hat t + \kappa_ss\hat s) + \ul{r}
\end{equation*}
where $\|\ul{r}\|$ is of order $\|(s, t)\|^2$ in this neighbourhood.

Assume now that the triple $p_2, p_3, p_4$ is generic and lies in this neighbourhood with
$p_i=p(s_i,t_i)$. For $\epsilon \leq 1$ let $p_i^\epsilon = p(\epsilon s_i, \epsilon t_i)$.
By a \textit{well-behaved $K_4$ contraction} of  $(G,p)$ over the  subgraph
$K_4 \subseteq G$, with vertices $v_1, \dots ,v_4$, we mean a framework sequence
$(G, p^k)$ with
\[
p^k=(p_1,p_2^{\epsilon_k},p_3^{\epsilon_k},p_4^{\epsilon_k},p_5,\dots ,p_n),
\]
where ${\epsilon_k} \to 0$ as $k\to \infty$ and where the local coordinates
$s_2, t_2, s_3, t_3, s_4, t_4$ satisfy the determinant condition
\[\left| \begin {array}{ccc}
s_2&t_2&s_2t_2\\
s_3&t_3&s_3t_3\\
s_4&t_4&s_4t_4
\end {array} \right| \neq 0.
\]
It is straightforward to see that we can choose a well-behaved $K_4$ contraction. For example if $s_i=i,t_i =i^2$ for $i=2,3,4$ then the determinant has the value 48.

\begin{lem}\label{l:justK4}
Let $\M$ be an irreducible surface of type 1 or 0.
Let $(K_4,(p_1,p_2^k, p_3^k, p_4^k)), k=1,2,\dots $, be a well-behaved contraction of frameworks
on $\M$ and let $u_k,  k=1,2,\dots $, be an associated sequence of infinitesimal flexes which forms a convergent sequence in $\bR^{12}$. Then the limit vector has the form $(u_1,u_1,u_1,u_1)$.
\end{lem}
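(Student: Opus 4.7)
The plan is to exploit the rigidity of $K_4$ in $\bR^3$: for each $k$ the flex $u^k$ is the restriction of a rigid motion $u^k(p)=t^k+\omega^k\times p$, so the only genuine constraints are the four tangentiality conditions $u_i^k\cdot n(p_i^k)=0$. Writing $v_i^k:=u_i^k-u_1^k=\omega^k\times(p_i^k-p_1)$ and $w_i:=\lim v_i^k=u_i-u_1$, the claim becomes $w_i=0$ for $i=2,3,4$. I would rescale via $\Omega^k:=\epsilon_k\omega^k$; substituting $p_i^k-p_1=\epsilon_k q_i+O(\epsilon_k^2)$ with $q_i=s_i\hat s+t_i\hat t$ gives $v_i^k=\Omega^k\times q_i+O(\epsilon_k|\Omega^k|)$. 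The well-behaved determinant forces at least two of the $q_i$ to be linearly independent, so $\Omega\mapsto(\Omega\times q_i,\Omega\times q_j)$ is injective and the boundedness of $v_i^k$ transfers to $\Omega^k$. Passing to a convergent subsequence $\Omega^k\to\Omega$ yields $w_i=\Omega\times q_i$. Since each $w_i$ is tangent to $\M$ at $p_1$, its normal component $(\Omega\times q_i)\cdot\hat n=\Omega_{\hat s}t_i-\Omega_{\hat t}s_i$ vanishes for $i=2,3,4$, forcing $\Omega_{\hat s}=\Omega_{\hat t}=0$, so $\Omega=\theta\hat n$ and $w_i=\theta(\hat n\times q_i)$ for a single scalar $\theta$.

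The main obstacle, where the distinct principal curvatures enter essentially, is to show $\theta=0$. For this I would expand the tangentiality $u_i^k\cdot n(p_i^k)=0$ one order further, using $n(p_i^k)=\hat n-\epsilon_k(\kappa_s s_i\hat s+\kappa_t t_i\hat t)+O(\epsilon_k^2)$ from the Taylor expansion of the normal. Writing $u_1^k=\xi_1^k\hat s+\eta_1^k\hat t$ and $\Omega^k=A^k\hat s+B^k\hat t+C^k\hat n$, a direct calculation using $u_1^k\cdot\hat n=0$ together with the identities $(\Omega^k\times q_i)\cdot\hat n=A^k t_i-B^k s_i$ and $(\Omega^k\times q_i)\cdot(\kappa_s s_i\hat s+\kappa_t t_i\hat t)=C^k s_i t_i(\kappa_t-\kappa_s)$ yields
\[
(A^k t_i-B^k s_i)-\epsilon_k(\kappa_s s_i\xi_1^k+\kappa_t t_i\eta_1^k)-\epsilon_k C^k s_i t_i(\kappa_t-\kappa_s)+O(\epsilon_k^2)=0.
\]
Dividing by $\epsilon_k$ and setting $a^k=A^k/\epsilon_k$, $b^k=B^k/\epsilon_k$ (the tangent components of $\omega^k$), the identity $a^k t_i-b^k s_i=\kappa_s s_i\xi_1^k+\kappa_t t_i\eta_1^k+C^k s_i t_i(\kappa_t-\kappa_s)+O(\epsilon_k)$ has bounded right-hand side, so $a^k,b^k$ are bounded and (after a further subsequence) converge to $a^\infty,b^\infty$.

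Taking limits, recalling $C^k\to\theta$ and $\xi_1^k,\eta_1^k\to\xi_1,\eta_1$, we obtain for each $i=2,3,4$
\[
(\kappa_s\xi_1+b^\infty)s_i+(\kappa_t\eta_1-a^\infty)t_i+\theta(\kappa_t-\kappa_s)s_i t_i=0.
\]
The well-behaved condition says precisely that no nontrivial linear combination $c_1 s+c_2 t+c_3 st$ vanishes at all three $(s_i,t_i)$, so all three coefficients must be zero. In particular $\theta(\kappa_t-\kappa_s)=0$, and since $\kappa_s\neq\kappa_t$ by hypothesis we conclude $\theta=0$. Hence $w_i=0$ for $i=2,3,4$, and the limit vector has the form $(u_1,u_1,u_1,u_1)$ as claimed. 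The delicate point is bookkeeping the expansions at the correct order in $\epsilon_k$ and justifying the boundedness of $a^k,b^k$ even though $|\omega^k|$ itself blows up like $1/\epsilon_k$ in the regime $\theta\ne 0$.
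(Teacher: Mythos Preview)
Your proof is correct and follows essentially the same approach as the paper. Both arguments use the rigidity of $K_4$ in $\bR^3$ to write $u_i^k-u_1^k$ as a cross product with a rotation vector, expand the tangentiality conditions $u_i^k\cdot n(p_i^k)=0$ to the relevant order in $\epsilon_k$, and arrive at the same $3\times 3$ linear system in the rotation components with coefficient matrix built from $s_i,\,t_i,\,s_it_i$; the well-behaved determinant together with $\kappa_s\neq\kappa_t$ then forces the conclusion.

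The only difference is organisational. The paper works directly with the unrescaled rotation vector $a^\epsilon$ and applies Cramer's rule once to the full $3\times 3$ system (with the $\epsilon$-dependent third column) to show all three components of $a^\epsilon$ are bounded, whence $(p_i^\epsilon-p_1)\times a^\epsilon\to 0$. You instead carry out a two-scale analysis: first bound the rescaled $\Omega^k=\epsilon_k\omega^k$ and pass to a subsequential limit $\Omega=\theta\hat n$, then unwind the tangential components $a^k=A^k/\epsilon_k,\,b^k=B^k/\epsilon_k$ at the next order to extract the limiting relation $c_1 s_i+c_2 t_i+\theta(\kappa_t-\kappa_s)s_it_i=0$ and conclude $\theta=0$. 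Your version makes the role of the two scales and the entry point of the curvature difference slightly more transparent; the paper's single Cramer step is more compact. Substantively they are the same argument.
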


\begin{proof} Let $u=(u_1,\dots , u_4)$ be an infinitesimal flex of $(K_4, p)$. Equivalently,
$u_i.n_i=0$ where
$n_i$ is the unit normal at $p_i$ and
$(p_i-p_j).(u_i-u_j)=0$  for $1 \leq i<j \leq 4$.
Since $(K_4, p)$ is infinitesimally rigid  in $\bR^3$ the flex
$u$ is equal to $u_a+u_b$ where $u_b$ is determined by translation by the vector $b$ and where
$u_a$ corresponds to an infinitesimal rotation about a line through $p_1$ with direction
vector $a$. Thus $u_1= b$ and we may choose the magnitude and direction of $a$ so that
$u_i-u_1= (p_i-p_1)\times a $, for $i=2,3,4$.
Substituting gives
$$(a \times (p_i-p_1)).n_i = u_1.n_i,$$
or equivalently,
$$ a.(n_i \times (p_i - p_1))+b.n_i=0,$$ for $i=2,3,4$.

We have the  normal vectors
$n(s,t) = dp/ds(s_i, t_i) \times dp/dt(s_i,t_i)$ as in Equation \ref{n(s,t)*} above.
At the point $p_i^\epsilon = p(\epsilon s_i, \epsilon t_i)$ these normals take the form
\[
n^\epsilon_i = n(\epsilon s_i, \epsilon t_i)= \hat n -\epsilon(\kappa_tt_i\hat t + \kappa_ss_i\hat s) + \ul{r}_i^\epsilon
\]
where $\|\ul{r}_i^\epsilon \| = O(\epsilon^2).$

Consider now an infinitesimal flex $u^\epsilon$ of the framework $(K_4, p^\epsilon)$ on $\M$. The associated equations are
\begin{eqnarray}\label{eqn:3}
a^\epsilon.(n_i^\epsilon \times (p_i^\epsilon-p_1))+b^\epsilon.n_i^\epsilon=0,
\end{eqnarray}
for $i=2,3,4$,
and we may identify the crossed product here as
\[
 n_i^\epsilon \times (p_i^\epsilon-p_1) =  (\hat n -\epsilon(\kappa_tt_i\hat t + \kappa_ss_i\hat s))\times(\epsilon(s_i\hat s+t_i\hat t) +1/2\epsilon^2(\kappa_s s_i^2+\kappa_t t_i^2) \hat n) + R_i^\epsilon
\]
with $\|R_i^\epsilon\| = O(\epsilon^3)$.

We may assume, by passing to a subsequence, that $\epsilon$ runs through a sequence $\epsilon_k$ tending to zero
and that the associated unit norm flexes $u^\epsilon$ converge to a limit flex
$u^0$  of the
degenerate framework $(K_4, (p_1,p_1,p_1,p_1))$ on $\M$. Let $b^0=u^0_1$ and let $b^\epsilon $ and $a^\epsilon$ be the associated vectors. While $b^\epsilon = u^\epsilon_1$ converges to $b^0$, as $\epsilon = \epsilon_k \to 0$,  the sequence $(a^{\epsilon_k})$ may be unbounded. However,
in view of the three equations
\[
u_i^\epsilon - u_1^\epsilon=(p_i^\epsilon - p_1) \times a^\epsilon
\]
and the definition of $p_i^\epsilon$
it follows that $\|a^{\epsilon_k}\|$ is at  worst of order $1/\epsilon_k$. We shall show that $\|a^{\epsilon_k}\|$ is in fact bounded and so, from the equation above, the desired
conclusion follows.

Returning to the three equations, see Equation (\ref{eqn:3}), which determine $a^\epsilon$ from $b^\epsilon$ we have
\[
a^\epsilon.(\epsilon s_i \hat{t}-\epsilon t_i\hat s-\epsilon^2 s_i t_i(\kappa_s-\kappa_t) \hat n +R_i^\epsilon)
-\kappa_s(b^\epsilon.\hat s)\epsilon s_i-\kappa_t(b^\epsilon.\hat t)\epsilon t_i+r_i^\epsilon=0,
\]
where $r^\epsilon_i=\|b^\epsilon . r_i^\epsilon\| = O(\epsilon^2)$.
Note that $\|a^\epsilon.R^\epsilon_i \|=O(\epsilon^2)$ and so it follows,
introducing coordinates for $a^\epsilon$, and cancelling a factor of $\epsilon$, that
\[
(a_s^\epsilon\hat s+a_t^\epsilon\hat t +a_n^\epsilon \hat n).(s_i\hat t-t_i\hat s-\epsilon s_it_i(\kappa_s-\kappa_t) \hat n)
-\kappa_s(b^\epsilon.\hat s)s_i-\kappa_t(b^\epsilon.\hat t)t_i=O(\epsilon),
\]
for $i=2,3,4$.
Thus
\[
-a^\epsilon_st_i + a^\epsilon_ts_i -a_n^\epsilon \epsilon s_it_i(\kappa_s-\kappa_t)
= d^\epsilon_i,\quad \mbox{for } i=2,3,4,
\]
where
\[
 d^\epsilon_i =b^\epsilon.(\kappa_ss_i\hat s + \kappa_tt_i\hat t) + X_i^\epsilon,
\]
with $ X_i^\epsilon =O(\epsilon).$

Let $\eta= \epsilon (\kappa_s-\kappa_t)$ for $i=2,3,4$, let $A_\epsilon$ be the matrix
\[\left[ \begin {array}{ccc}
-t_2&s_2&-s_2t_2\eta\\
-t_3&s_3&-s_3t_3\eta\\
-t_4&s_4&-s_4t_4\eta
\end {array} \right],
\]
and note that $\det A_\epsilon = C\epsilon$ for some nonzero constant $C$. By Cramer's rule
we have
\[
a_n^\epsilon = (\det A_\epsilon)^{-1}\left| \begin {array}{ccc}
-t_2&s_2&d_2^\epsilon\\
-t_3&s_3&d_3^\epsilon\\
-t_4&s_4&d_4^\epsilon
\end {array} \right|
= (\det A_\epsilon)^{-1}\left| \begin {array}{ccc}
-t_2&s_2&X_2^\epsilon\\
-t_3&s_3&X_3^\epsilon\\
-t_4&s_4&X_4^\epsilon
\end {array} \right|,
\]
since the column for $d_i^\epsilon - X_i^\epsilon$ is a linear combination of the first two columns. It follows that the sequence $a_n^{\epsilon_k}$ is bounded.

The boundedness of $(a_s^{\epsilon_k})$, and similarly $(a_t^{\epsilon_k})$, follows more readily, since
\[
a_s^\epsilon = (\det A_\epsilon)^{-1}\left| \begin {array}{ccc}
d_2^\epsilon&s_2&-s_2t_2\eta\\
d_3^\epsilon&s_3&-s_3t_3\eta\\
d_4^\epsilon&s_4&-s_4t_4\eta
\end {array} \right|
\]
and the $\epsilon$ factors cancel. Thus, the sequence of vectors $a^{\epsilon_k}$ is bounded, as desired.
\end{proof}

\begin{lem}\label{p:k4onM}
Let $\M$ be an irreducible surface  of type $1$ or $0$, let $G'$ be $(2,1)$-sparse with $v_1,\dots ,v_4$ inducing a $K_4$ subgraph, and
let $p=(p_1, \dots ,p_n)$ be a generic framework vector in $\M^n$.
Let
\[
p^k=(p_1,p_2^k, p_3^k, p_4^k, p_5, \dots ,p_n)
\]
be sequence in $\M^n$, with $p_i^k \to p_1$  as $ k\to \infty$,  for $i=2,3,4$,
such that $(G, p^k)$ is a well-behaved contraction with limit  $(G', p^\infty)$.
If the rigid infinitesimal motion spaces $\R_{p^k}$ and $\R_{p^\infty}$ are one-dimensional
and the dimension of the flex space $\F(G',p^k)$ is greater than $1$ for all $k$,
then there is a unit norm flex $u$ in  $\F(G',p^\infty)$ which is orthogonal to $\R_{p^\infty}$
and satisfies $u_1=u_2=u_3=u_4$.
\end{lem}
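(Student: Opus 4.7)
My plan is to obtain the desired flex $u$ as a subsequential limit of unit-norm flexes $u^k$ of the approximating frameworks $(G',p^k)$, and then invoke the preceding $K_4$-contraction result, Lemma~\ref{l:justK4}, to force the equality $u_1 = u_2 = u_3 = u_4$ at the first four coordinates of the limit.

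Concretely, the dimension gap $\dim \F(G',p^k) \geq 2 > 1 = \dim \R_{p^k}$ means that the orthogonal complement of $\R_{p^k}$ inside $\F(G',p^k)$ is non-trivial, so one can choose a unit-norm flex $u^k = (u_1^k,\dots,u_n^k)$ of $(G',p^k)$ with $Q_{p^k}u^k = 0$. A Bolzano--Weierstrass argument in $\bR^{3n}$ then yields a subsequential limit $u^\infty$ of unit norm. Since the rigidity matrix entries depend continuously on $q$ (from the smoothness of the normal field on $\M$), one obtains $R_\M(G',p^\infty)u^\infty = 0$ in the limit, and so $u^\infty \in \F(G',p^\infty)$. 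For orthogonality to $\R_{p^\infty}$ I would appeal to the continuity of $q \to Q_q$: by the remarks preceding Lemma~\ref{hen2independent} this holds as long as $\dim \R_q$ is locally constant (true here by hypothesis) and the framework vector contains three non-collinear points (true since the unmerged points $p_5,\dots,p_n$ remain generic on $\M$). Hence $Q_{p^\infty}u^\infty = \lim_k Q_{p^k}u^k = 0$.

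Finally I would restrict attention to the first four vertices. The tuple $(u_1^k,u_2^k,u_3^k,u_4^k)$ is an infinitesimal flex on $\M$ of the $K_4$-subframework $(K_4,(p_1,p_2^k,p_3^k,p_4^k))$, and since $(G',p^k) \to (G',p^\infty)$ is by hypothesis a well-behaved contraction, these subframeworks form a well-behaved $K_4$-contraction in the sense of Lemma~\ref{l:justK4}. The restricted sequence converges in $\bR^{12}$ to $(u_1,u_2,u_3,u_4)$, so that lemma forces $u_1 = u_2 = u_3 = u_4$, and $u^\infty$ is the flex sought.

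The step I expect to require the most care is the continuity of $q \to Q_q$ at the degenerate limit $p^\infty$, since three of the framework points have coalesced there. However, this is already subsumed by the general continuity discussion in Section~\ref{surfaceintro}: the constant-dimension hypothesis on $\R_q$ together with the retained generic non-collapsed points supplies everything needed, so no further technical work is required beyond citing that continuity.
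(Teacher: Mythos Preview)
Your proposal is correct and follows essentially the same argument as the paper's proof: choose unit-norm flexes $u^k\in\F(G',p^k)$ orthogonal to $\R_{p^k}$, pass to a convergent subsequence, invoke Lemma~\ref{l:justK4} on the first four coordinates, and use continuity of $q\mapsto Q_q$ (under the constant-dimension hypothesis) to retain orthogonality to $\R_{p^\infty}$ in the limit. The paper's write-up is terser but the logical structure is identical.
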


\begin{proof}
By the hypotheses for
each $k$ there exists an
infinitesimal flex
$
u^k=(u_1^k, \dots ,u_n^k)
$
of $(G,p^k)$ lying in the multiple tangent space $\T_{p^k}$
such that the Euclidean norm  of $u^k$ is unity and
$u^k$ is orthogonal to the subspace $\R_{p^k}$.
Taking a subsequence if necessary we may assume that $u^k$ converges to $u$ as $k \to \infty$.
By Lemma \ref{l:justK4} the velocities $u_1,\dots ,u_4$ agree.
By the hypotheses, the orthogonal projections $Q_k$ onto  $\R_{p^k}$
converge to the projection $Q_\infty$ onto  $\R_{p^\infty}$ and so $u$ is orthogonal
to $\R_{p^\infty}$, as desired.
\end{proof}

\begin{cor}\label{vertex2k4}
The vertex-to-$K_4$ move for an irreducible surface of type $1$ or $0$ preserves generic infinitesimal rigidity.
\end{cor}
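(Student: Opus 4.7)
The plan is to mirror the cylinder case (Corollary \ref{vertex2k4cylinder}), with Lemma \ref{p:k4onM} replacing Lemma \ref{l:vK4lemma} to supply the limit flex. I would argue the contrapositive: suppose $G'$, obtained from $G$ by the vertex-to-$K_4$ move, is generically dependent on $\M$, and aim to produce a generic dependence for $G$. To set up, fix a generic framework $(G',p')$ with $p'=(p_1,p_2,p_3,p_4,p_5,\dots,p_n)$, labelled so that $p_1,\dots,p_4$ sit on the $K_4$ subgraph. At the regular point $p_1\in\M$, with distinct principal curvatures $\kappa_s\neq\kappa_t$, form a well-behaved $K_4$ contraction $(G',p^k)$ by choosing local coordinates $(s_i,t_i)$ satisfying the determinant condition of Section \ref{vtok4secsub} and setting $p_i^k=p(\epsilon_k s_i,\epsilon_k t_i)$ for $i=2,3,4$ with $\epsilon_k\to 0$.

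Next, I would verify the two hypotheses of Lemma \ref{p:k4onM}. First, $\R_{p^k}$ and $\R_{p^\infty}$ are $1$-dimensional because the tangential isometry group of a type $1$ surface is one-parameter, and the natural embedding
\[
(w_1,w_5,\dots,w_n)\mapsto(w_1,w_1,w_1,w_1,w_5,\dots,w_n)
\]
from rigid motions of the contracted framework into those of $(G',p^\infty)$ is injective and dimension-preserving. Second, $\dim\F(G',p^k)>1$ throughout the sequence, since the assumed dependence of $G'$ forces the generic nullity of $R_\M(G',\cdot)$ to exceed $1$ and $\dim\ker R_\M(G',\cdot)$ is upper semi-continuous under specialisation. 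Lemma \ref{p:k4onM} then provides a unit-norm flex $u\in\F(G',p^\infty)$ with $u_1=u_2=u_3=u_4$ and $u\perp\R_{p^\infty}$.

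Finally, set $p=(p_1,p_5,\dots,p_n)$ and $\bar u=(u_1,u_5,\dots,u_n)$. Since each edge of $G$ corresponds to an edge of $G'$ outside the $K_4$, and since $u_{v_i}=u_1$ for every $K_4$ vertex, the edge and tangency conditions for $(G,p)$ are inherited from those for $(G',p^\infty)$, so $\bar u\in\F(G,p)$. If $\bar u$ were a rigid motion of $(G,p)$ its lift would lie in $\R_{p^\infty}$ and coincide with $u$, contradicting $u\perp\R_{p^\infty}$ and $\|u\|=1$. Hence $\bar u$ is a proper flex; since $p$ is generic for $G$, this exhibits the required generic dependence of $G$, contradicting its assumed rigidity. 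The main obstacle is the careful verification of the preconditions for Lemma \ref{p:k4onM}: keeping $\dim\R_{p^\infty}=1$ at the degenerate limit and maintaining the strict inequality $\dim\F(G',p^k)>1$ along the specialised sequence both require that the surviving coordinates and the auxiliary data $(s_i,t_i)$ stay sufficiently generic so that neither the rank of the rigidity matrix nor the dimension of the tangential isometry space of $\M$ suffers an unexpected drop as $\epsilon_k\to 0$.
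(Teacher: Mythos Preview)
Your proposal is correct and follows exactly the approach the paper intends: the paper's own proof is a single sentence, stating that the result follows from Lemma \ref{p:k4onM} ``in the same manner as the proof of Corollary \ref{vertex2k4cylinder}'', and you have carried out precisely that substitution, supplying the details the paper omits. The only minor point is that your hypothesis check (``$\R_{p^k}$ is $1$-dimensional because the tangential isometry group of a type $1$ surface is one-parameter'') addresses only the type $1$ case, whereas the corollary also covers type $0$; but this is a looseness already present in the statement of Lemma \ref{p:k4onM}, and the type $0$ case is strictly easier (the orthogonality condition becomes vacuous).
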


\begin{proof}
This follows from the previous lemma in the same manner as the proof of 
Corollary \ref{vertex2k4cylinder}.
\end{proof}

\subsection{The vertex-to-$4$-cycle move.}

\begin{lem}\label{l:Vertexto4cycle}
Let $\M$ be an irreducible surface of type $k$
and let $G \to G'$ be a vertex-to-$4$-cycle move. If $G$ is minimally infinitesimally rigid  on $\M$ then
$G'$ is  minimally infinitesimally rigid on $\M$.
\end{lem}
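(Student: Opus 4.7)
The strategy follows the template established in Lemmas \ref{hen2independent} and \ref{Vertexsplitprop}, using a well-behaved contraction $(G',p^k) \to (G',p^\infty)$. Label the vertices so that the 4-cycle created by the move has vertices $v_0,v_2,v_1,v_3$ in cyclic order, with $v_0$ the new vertex split off from $v_1$, and such that $G'$ contains the edges $v_0v_2$, $v_2v_1$, $v_1v_3$, $v_3v_0$ but no edge $v_0v_1$. Let $p=(p_1,\dots,p_n)$ be a generic framework vector for $G$ on $\M$, and take $p'=(p_0,p_1,\dots,p_n)$ a generic framework vector for $G'$. By hypothesis $G$ is $(2,k)$-tight, and the vertex-to-$4$-cycle move preserves this count, so $|E(G')|=2|V(G')|-k$; therefore it suffices to show that $(G',p')$ is infinitesimally rigid on $\M$.

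I will argue by contradiction: suppose $(G',p')$ admits an infinitesimal flex not in the rigid motion subspace. Choose a sequence $p_0^k$ of points on $\M$ tending to $p_1$ in a well-behaved manner, and set $p^k=(p_0^k,p_1,p_2,\dots,p_n)$. Each $(G',p^k)$ inherits an infinitesimal flex $u^k=(u_0^k,u_1^k,\dots,u_n^k)$ of unit norm with $Q_{p^k}u^k=0$. Passing to a subsequence using the Bolzano-Weierstrass theorem we may assume $u^k \to u^\infty=(u_0,u_1,\dots,u_n)$, which is a unit-norm flex of the degenerate framework $(G',p^\infty)$ with $p_0^\infty=p_1$. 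The key step is to deduce that $u_0=u_1$. From the edge conditions for $v_0v_j$ and $v_1v_j$, $j=2,3$, passage to the limit gives
\[
(u_0-u_j)\cdot(p_1-p_j)=0\quad\text{and}\quad (u_1-u_j)\cdot(p_1-p_j)=0,
\]
so $(u_0-u_1)\cdot(p_1-p_j)=0$ for $j=2,3$. Since both $u_0$ and $u_1$ lie in the tangent plane $\T_{p_1}$, their difference is tangential. Genericity of $p$ ensures that the projections of $p_1-p_2$ and $p_1-p_3$ onto $\T_{p_1}$ are linearly independent, whence $u_0=u_1$.

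Having established $u_0=u_1$, the vector $u:=(u_1,u_2,\dots,u_n)$ is well-defined, lies in $\T_p$, and is an infinitesimal flex of $(G,p)$, since every edge of $G$ incident with $v_1$ corresponds (by the definition of the vertex-to-$4$-cycle move) either to an edge of $G'$ incident with $v_1$ or to an edge incident with $v_0$, and in the latter case the limit equation at $p_1$ coincides with that for $v_1$ because $u_0=u_1$. Finally, by the continuity of $q\mapsto Q_q$ in a neighbourhood of the (mildly degenerate) vector $p^\infty$, together with $Q_{p^k}u^k=0$, we obtain $Q_{p^\infty}u^\infty=0$, and this passes to $Q_pu=0$ under the natural injection $\F(G,p)\hookrightarrow\F(G',p^\infty)$. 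Hence $u$ is a non-trivial infinitesimal flex of $(G,p)$ on $\M$, contradicting the infinitesimal rigidity of $G$. The main obstacle is the verification that the tangential projections of $p_1-p_2$ and $p_1-p_3$ are generically independent, but this is a single non-trivial polynomial condition on the generic point and so holds for generic $p$ on any irreducible surface $\M$.
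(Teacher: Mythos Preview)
Your proposal is correct and follows essentially the same approach as the paper: a contraction sequence $p_0^k\to p_1$, passage to a convergent subsequence of unit flexes orthogonal to rigid motions, the key observation that the limiting flex conditions force $(u_0-u_1)\cdot(p_1-p_j)=0$ for $j=2,3$, and then the tangent-plane argument to conclude $u_0=u_1$. The paper phrases the final independence condition as choosing $p_2,p_3$ so that the only tangent vector orthogonal to $p_1-p_2$ and $p_2-p_3$ is zero, which is equivalent to your formulation in terms of independent tangential projections.
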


\begin{proof}
Once again we use a sequential contraction argument. Let $G$ have $n$ vertices $v_1, v_2, ,\dots ,v_n$
and edges $v_1v_2, v_1v_3$ and let $G\to G'$ be the move in question, with new vertex $v_0$ and edges $v_0v_2, v_0v_3$. It will be sufficient to show that if  $G'$ is dependent on $\M$ then so too is $G$.

Let $p, p'$ be the generic framework vectors for $G, G'$ respectively, with $p'=(p_0,p_1,\dots ,p_n)$.
Also let $p^k=(p_0^k,p_1, \dots ,p_n)$ be generic, with $p_0^k$ converging $p_1$.
% UNNEC tangentially, in the sense that the unit vector in the direction $p_0^k-p_1$ converges to a fixed %vector $a$ which is tangential to $\M$ at $p_1$ and is such that the three vectors $a, p_2-p_1,p_3-p_1$ are %linearly independent. (In case $\M$ is a plane we just require that $a$ is not dependent on either of the %other vectors.)
By the assumption  for each $k$ there exists a unit vector
$
u^k=(u_0^k, u_1^k, \dots ,u_n^k)
$
in the joint tangent space $\T_{p^k}$ which is an infinitesimal flex of $(G', p^k)$ and which is orthogonal to the rigid motion flexes. In earlier notation, $Q_{p^k}u^k=0$.  Taking subsequences, we may assume that $u^k$ converges to some unit norm flex $u'=(u_0, u_1,\dots ,u_n)$ of the degenerate framework $(G',p^\infty)$, as $k$ tends to infinity, where $p^\infty = (p_1,p_1,p_2,\dots ,p_n)$. Also, by the assumption on $G$, this degenerate framework (for $G'$) has a space of rigid motion flexes which is naturally identifiable with the space of rigid motion flexes of  $(G,p)$. It remains to show that $u_0=u_1$ so that we may conclude that $(u_1,\dots ,u_n)$ is a proper flex of $(G,p)$, completing the proof.

It follows from the flex conditions and taking limits that
$u_0-u_2$ is orthogonal to $p_1-p_2$, and  $u_0-u_3$ is orthogonal to $p_1-p_3$.
Also $u_1-u_2$ is orthogonal to $p_1-p_2$, and  $u_1-u_3$ is orthogonal to $p_1-p_3$.
It follows, subtracting, that $u_0-u_1$ is orthogonal to $p_1-p_2$ and to $p_2-p_3$.
At the same time $u_0-u_1$ lies in the tangent plane at $p_1$ and we may choose $p_2, p_3$ so that
$0$ is the only tangent vector orthogonal to  $p_1-p_2$ and to $p_2-p_3$.

\end{proof}

\section{The algebraic approach}
\label{sec:algh2}

We now give a direct algebraic proof of the preservation of infinitesimal rigidity under the Henneberg $2$ move on an irreducible surface.
We expect this approach to be more widely useful in the analysis of 
bar-joint frameworks in higher dimensions.
\medskip

Assume that $\M$ is an irreducible surface of type $k$ which is defined by the irreducible polynomial $m(x,y,z)=0$ where the
coefficients of $m$ are in $\bQ$.
Suppose that $G$ is a $(2,k)$-tight graph and that $(G,p)$ is a generic framework on $\M$ with $p=(p_1,\dots,p_n)$. Also let $p^+=(p,p_v)$
where $(G^+,p^+)$ is generic on $\M$ and $G^+$ derives from $G$ through a Henneberg $2$ move. We write  $v_1v_2$ for the edge involved in the Henneberg move and $v$ for the new vertex. 

Since $G$ is independent the rigidity matrix
$R_\M(G\backslash v_1v_2,p)$ has a flex vector $u=(u_1,\dots,\break u_n)$ in the nullspace which is not a flex of
$(G,p)$. In particular $(p_1-p_2).(u_1-u_2) \neq 0$.
Moreover we may choose $u$ as a solution of
the equations $R_\M(G,p)u=A$ where $A$ is a column vector
with all entries zero except for an entry of unity in the row representing the edge $v_1v_2$. This gives  a
set of linear equations with coefficients in
$\bQ(p)$ and we can select a solution  for which all coordinates of the  velocities $u_i$ lie in $\bQ(p)$.

We  show first that $u$ does not extend to a flex of
$(G^+,p^+)$.

Suppose by way of contradiction that $u^+=(u,u_v)$ is an extension of $u$ to a flex of $(G^+,p^+)$
with component $u_v$ acting at $p_v$. Introducing the notation $p_{i,j}=p_i-p_j$, $p_{v,i}=p_v-p_i$
and similarly $u_{i,j}=u_i-u_j$, $u_{v,i}=u_v-u_i$ the flex $u_v$ satisfies four equations
\[
p_{v,i}.u_{v,i}=0,\quad 1 \leq i \leq 3, \quad  u_v.N(p_v)=0,
\]
where $N(p_v)$ is the normal to the surface $\M$ at $p_v$ given by
\[
N(p_v)= (\nabla m)(p_v) = (\partial m/\partial x, \partial m/\partial y, \partial m/\partial z)|_{p_v}.
\]
Introducing the coordinate notation $(p^{x}_{v,1}, p^{y}_{v,1}, p^{z}_{v,1})$
for $p_{v,1}$ these four equations for the three
components of $u_{v,1}$ have a consistent solution if and only if $\det(D)=0$, where
\[
D=\begin{bmatrix}
p_{v,1}^x&p_{v,1}^y&p_{v,1}^z&0\\
p_{2,1}^x&p_{2,1}^y&p_{2,1}^z&-u_{2,1}.p_{v,2}\\
p_{3,1}^x&p_{3,1}^y&p_{3,1}^z&-u_{3,1}.p_{v,3}\\
N(p_v)^x&N(p_v)^y&N(p_v)^z&u_1.N(p_v)
\end{bmatrix}.
\]
Let $P_v =(x,y,z)$ be the vector of indeterminates corresponding to $p_v$, let $P_{v,i}=P_v-p_i,i=1,2,3$, and let
\[
D(P_v)=D(x,y,z)=\begin{bmatrix}
P_{v,1}^x&P_{v,1}^y&P_{v,1}^z&0\\
P_{2,1}^x&P_{2,1}^y&P_{2,1}^z&-u_{2,1}.p_{v,2}\\
P_{3,1}^x&P_{3,1}^y&P_{3,1}^z&-u_{3,1}.p_{v,3}\\
N(P_v)^x&N(P_v)^y&N(P_v)^z&u_1.N(P_v)
\end{bmatrix}.
\]
Then the polynomial $\det(D(P_v))$ lies in the ring $\bQ(p)[P_v]$.
Since $$0=\det(D)=\det(D(P_v))|_{p_v}$$ the polynomial $\det(D(P_v))$
evaluates to zero under the substitution $P_v=p_v$. Since $p_v$ is  generic on $\M$ this implies that
$\det(D(P_v))$ is in the ideal of $\bQ(p)[P_v]$
generated by the surface polynomial $m(x,y,z)$. Thus $\det(D(P_v))=h(P_v)m(P_v)$ for some
polynomial
$h(P_v)$ in $\bQ(p)[P_v]$.

Since $\det(D(P_v)) = h(P_v)m(P_v)$ and $\nabla(m(P_v))=N(P_v)$ we have
\[\nabla(\det(D(P_v)))=h(P_v)N(P_v)+\nabla(h(P_v))m(P_v)
\]
and so
\[
\nabla(\det(D(P_v)))|_{p_v^\prime}=h(p_v^\prime)N(p_v^\prime)
\]
for any point $p_v^\prime$ satisfying
$m(p_v^\prime)=0$. This implies
$a.\nabla(\det(D(P_v)))|_{p_v^\prime}=0$ for any $a \in \bR^3$ satisfying
$a.N(p_v^\prime)=0$ and
any point $p_v^\prime$ satisfying $m(p_v^\prime)=0$. We consider $p_v^\prime=p_1$ which satisfies this property.

We have $u_1.N(p_1)=0$. Also, since the first row of the matrix $D(P_v)|_{p_1}$ is zero we get a non-zero
contribution to $\nabla(\det(D(P_v)))|_{p_1}$ only from the action of the $\nabla$ operator on the first
row of $D(P_v)$.
Thus, in vector form,
$\nabla(\det(D(P_v)))|_{p_1}$ is the determinant of the matrix
\[
\begin{bmatrix}
i&j&k&0\\
p_{2,1}^x&p_{2,1}^y&p_{2,1}^z&-u_{2,1}.p_{1,2}\\
p_{3,1}^x&p_{3,1}^y&p_{3,1}^z&-u_{3,1}.p_{1,3}\\
N(p_1)^x&N(p_1)^y&N(p_1)^z&u_1.N(p_1)
\end{bmatrix}.
\]
Expanding the determinant along the final column gives
\[
\nabla(\det(D(P_v)))|_{p_1}=((p_{2,1}.u_{2,1})p_{3,1} \times N(p_1)-(p_{3,1}.u_{3,1})p_{2,1} \times N(p_1))
\]
and so from the above
\[a.((p_{2,1}.u_{2,1})p_{3,1} \times N(p_1)-
   (p_{3,1}.u_{3,1})p_{2,1} \times N(p_1))=0
\]
for all $a$ with the property that $a.N(p_1)=0$.

The vector $a=N(p_1) \times (p_{2,1} \times N(p_1))$ satisfies $a.N(p_1)=0$ and
$a.\nabla(\det(D(P_v)))|_{p_1}=0$ gives the
condition $(p_{2,1}.u_{2,1})b=0$ where
\[
b=(N(p_1) \times (p_{2,1} \times N(p_1))).(p_{3,1}\times N(p_1))=N(p_1).(p_{3,1} \times p_{2,1}).
\]
We have $b \neq 0$ because the condition that $N(p_1).(p_{3,1} \times p_{2,1})=0$ for all $p_2$, $p_3$ on $\M$ contradicts 
the smoothness requirement that $p_{3,1} \times p_{2,1}$ becomes parallel to $N(p_1)$ for $p_2$ and $p_3$ close to $p_1$. Thus $p_{2,1}.u_{2,1}=0$ which is
contrary to our original choice of $u$
and so we conclude that $u$ does not extend to a flex of $(G^+,p^+)$.

On the other hand, suppose that a flex $u=(u_1,\dots , u_n)$ of $(G\backslash v_1v_2,p)$
on $\M$ does extend to
a flex $(u,u_v)$ of $(G^+,p^+)$ on $\M$. Then $u_v$ is the solution of the three equations
\[
u_v.(p_v-p_1)=u_1.(p_v-p_1), u_v.(p_v-p_2)=u_2.(p_v-p_2) \mbox{ and } u_v.N(p_v)=0
 \]
and the solution is
unique because
$(p_v-p_1)\times (p_v-p_2).N(p_v) \neq 0$ for generic $p_v,p_1,p_2$ for the same reason given
above that $b \neq 0$. Also if $u$ is zero then $(u,u_v)$ is zero
and so every flex in the nullspace of $R_M(G^+,p^+)$ is the extension of a flex of $R_\M(G\backslash v_1v_2,p)$.

Finally, consider the matrix  $R^\prime = R_M(G\backslash v_1v_2,p)$  of size $m^\prime \times n^\prime$. Since $G$ is independent we have
$m^\prime+1 \leq n^\prime$ and $\rank(R^\prime)=m^\prime$. For the matrix $R=R_\M(G^+,p^{+})$  of size $m \times n$ we have $m=m^\prime+4$
and $n = n^\prime+3$ and so $m \leq n$ and $n-m = n^\prime - m^\prime - 1$. Every flex of
$R^\prime$ either does not extend to a flex of $R$ or extends to a unique flex of $R$ and every
flex of $R$ is the extension of some flex of $R^\prime$. Thus
$|\nulty(R^\prime)| > |\nulty(R)|$. By
Lemma \ref{lem1} the rank of $R$ is $m$ which means that $G^+$ is independent on $\M$, as required.

\begin{lem} \label{lem1}
Let $R$ be an $m \times n$ matrix with $m \leq n$ and $R^\prime$ an $m^\prime \times n^\prime$ matrix
with
$m^\prime \leq n^\prime$ and $\rank(R^\prime )=m^\prime$. If $n-m = n^\prime - m^\prime - 1$ and
$|\nulty(R)| < |\nulty(R^\prime)|$ then $\rank(R)=m$.
\end{lem}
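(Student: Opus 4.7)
The plan is to carry out a direct rank–nullity count, since all the ingredients of the lemma are purely dimensional.

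First I would apply the rank–nullity theorem to $R^\prime$. Since $\rank(R^\prime) = m^\prime$, we have
\[
\nulty(R^\prime) = n^\prime - m^\prime.
\]
Using the hypothesis $n - m = n^\prime - m^\prime - 1$, this rearranges to
\[
\nulty(R^\prime) = (n - m) + 1.
\]

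Next I would feed this into the strict inequality $\nulty(R) < \nulty(R^\prime)$, which (since both nullities are integers) gives
\[
\nulty(R) \leq n - m.
\]
Applying rank–nullity to $R$ yields $\rank(R) = n - \nulty(R) \geq m$. Combined with the trivial bound $\rank(R) \leq m$, which holds because $R$ has only $m$ rows, this forces $\rank(R) = m$, as required.

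The hard part will be nothing at all: the lemma is a one-line dimension count, and the only subtlety is keeping track of which of $R$ and $R^\prime$ has the extra row/column in the hypothesis $n - m = n^\prime - m^\prime - 1$. The real work of Section \ref{sec:algh2} is already done in the algebraic argument above the lemma statement, which produces an injection from $\nulty(R)$ into $\nulty(R^\prime)$ that fails to be surjective; this lemma simply converts that strict nullity inequality into the desired full row-rank conclusion.
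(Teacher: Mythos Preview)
Your proof is correct and follows essentially the same rank--nullity count as the paper's own proof, which also derives $\rank(R) \geq m$ from $\nulty(R^\prime) = n^\prime - m^\prime$ together with the strict nullity inequality. Your version is slightly more explicit in invoking integrality and the trivial upper bound $\rank(R)\leq m$, but the argument is the same.
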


\begin{proof}
$\rank(R)= n - |\nulty(R)| \geq n-|\nulty(R^\prime)|+1$ and
$|\nulty(R^\prime)| = n^\prime-m^\prime$ so $\rank(R) \geq m$.
\end{proof}

\section{Minimal rigidity on type $1$ Surfaces}
\label{theoremsec}

There is a final independence preserving move that we need for the proof of the main result. Recall that
if $G$ and $H$ are graphs with vertices $g \in G, h \in H$ then the \emph{edge joining move}
combines $G$ and $H$ by adding the edge $gh$.

%For the plane or the sphere, if $G$ and $H$ have minimally rigid generic realisations then $R_\M(G,p)$ and
%$R_\M(H,q)$ have $3$-dimensional nullspaces. Thus  $R_\M(G\cup H,(p,q))$ has a $6$-dimensional nullspace and adding $gh$ adds at most one to
%the rank.
%Similarly, infinitesimal rigidity is
%not preserved in the case of the cylinder since the nullspace is at least $3$-dimensional.
%However, we have the following.

\begin{lem}\label{edgejoin}
Let  $\M$ be an irreducible  surface of type $1$. Let $(G,p)$ and $(H,q)$ be
minimally infinitesimally rigid on $\M$, and let $G'$ be an edge join of $G$ and $H$ through an edge $gh$. If
 $|V(G)|$ and $|V(H)|$ are greater than $4$ and $(p_g,p_h)$ is generic on $\M$ then $(G', (p,q))$ is generically minimally infinitesimally rigid on $\M$.
\end{lem}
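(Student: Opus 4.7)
The strategy is to show that every infinitesimal flex of $(G',(p,q))$ on $\M$ is a rigid motion flex, which combined with the easy count $|E(G')|=(2|V(G)|-1)+(2|V(H)|-1)+1=2|V(G')|-1$ and Theorem~\ref{isostaticnecessity} will yield minimal infinitesimal rigidity.

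Because $\M$ has type $1$, the space $\R_{(p,q)}$ of rigid motion flexes is spanned by a single tangential infinitesimal isometry $r$ of $\M$, evaluated at the framework points. Since $|V(G)|,|V(H)|>4$, Theorem~\ref{conetorustheorem} forces $G$ and $H$ to be $(2,1)$-tight, ruling out the small exceptional graphs $K_1,\dots,K_4$ whose flex spaces on $\M$ need not reduce to rigid motions. By hypothesis $(G,p)$ and $(H,q)$ are minimally infinitesimally rigid, so their flex spaces each equal a one-dimensional rigid motion space generated by the restriction of the same isometry $r$. Thus for any $u\in\F(G',(p,q))$ the restrictions $u|_{V(G)}$ and $u|_{V(H)}$ take the form $\alpha\, r|_{V(G)}$ and $\beta\, r|_{V(H)}$ for some scalars $\alpha,\beta\in\bR$.

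The only bar of $G'$ not already present in $G\sqcup H$ is $gh$, whose infinitesimal rigidity equation becomes
\[
(\alpha\, r(p_g)-\beta\, r(p_h))\cdot(p_g-p_h)=0.
\]
Since $r$ arises from a Euclidean isometry of $\bR^3$, every pair of its values satisfies the rigidity condition, giving $r(p_g)\cdot(p_g-p_h)=r(p_h)\cdot(p_g-p_h)$, so the edge equation collapses to
\[
(\alpha-\beta)\, c=0, \qquad c:=r(p_g)\cdot(p_g-p_h).
\]
Once $c\neq 0$ is established, we conclude $\alpha=\beta$, whence $u=\alpha\, r\in\R_{(p,q)}$, and the flex space of $(G',(p,q))$ is exactly one-dimensional.

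The main obstacle is the non-degeneracy $c\neq 0$, and this is precisely where the genericity hypothesis on $(p_g,p_h)$ is used. The plan is an algebraic genericity argument: $r$ is a non-trivial tangential vector field on the irreducible surface $\M$, so its zero locus is a proper algebraic subset of $\M$, and for generic $p_g$ the vector $r(p_g)$ is non-zero. The equation $r(p_g)\cdot(p_g-p_h)=0$ then defines a proper algebraic subvariety of $\M\times\M$; non-emptiness of its complement can be verified by producing a single pair $(p_g,p_h)$ with $c\neq 0$ on each sample type $1$ surface (for instance an elliptical cylinder, cone, spheroid, or torus, where the direction of $r(p_g)$ can be computed explicitly and is generically not orthogonal to a generic chord). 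A generic pair therefore satisfies $c\neq 0$, completing the argument.
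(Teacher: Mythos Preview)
Your argument is essentially the same as the paper's: both observe that $u|_{V(G)}=\alpha\,r|_{V(G)}$ and $u|_{V(H)}=\beta\,r|_{V(H)}$ and then use the joining edge together with genericity to force $\alpha=\beta$ (the paper first normalises so that $\alpha=0$ and then argues $\beta=0$, but this is cosmetic). Two points deserve correction.

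\textbf{Circularity.} You invoke Theorem~\ref{conetorustheorem} to conclude that $G$ and $H$ are $(2,1)$-tight, but this lemma is an ingredient in the proof of that theorem. The appeal is also unnecessary: the hypothesis already gives that $(G,p)$ and $(H,q)$ are minimally infinitesimally rigid, and together with $|V(G)|,|V(H)|>4$ on a type~$1$ surface this yields one-dimensional flex spaces directly (cf.\ the remark $\R_q=\ker R_\M(K_n,q)$ for $n\geq 6-k$). For the edge count you may cite Theorem~\ref{necessity} instead.

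\textbf{The non-degeneracy $c\neq 0$.} Your proposed verification on ``sample type~$1$ surfaces'' does not establish the claim for an arbitrary irreducible type~$1$ surface. A uniform argument is available: for generic $p_g$ one has $r(p_g)\neq 0$, and then $c=0$ confines $p_h$ to the affine plane through $p_g$ orthogonal to $r(p_g)$. Were $\M$ contained in this plane, $\M$ would be planar and hence of type~$3$, contradicting the hypothesis; so the locus $\{c=0\}$ is a proper subvariety of $\M$ and a generic $p_h$ avoids it. With this adjustment your proof is complete and matches the paper's.
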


\begin{proof}
Consider the block matrix form
\begin{equation*}
R_{\M}(G',(p,q))=
 \begin{bmatrix}
R_\M(G,p) & 0\\
* & *\\
0 & R_\M(H,q)
 \end{bmatrix}.
\end{equation*}
By the hypotheses the nullspaces of the rigidity matrices $R_\M(G,p)$ and  $R_\M(H,q)$
are one-dimensional. Let $u=(u_p,u_q)$ be an infinitesimal flex of the edge-joined framework and let $gh$ be the joining edge for $G'$.
Subtracting a tangential rigid motion infinitesimal flex we may assume that $u_p$ assigns a zero velocity to
the framework joint $p_g$. Since $u_p$ is a flex of $(G,p)$ on $\M$ it follows that $u_p=0.$ Also in view of the generic row in $R_{\M}(G',(p,q))$ for the joining edge it follows that $u_q$ assigns a velocity to the framework vertex $q_h$ which is linearly independent from the one-dimensional space of velocity vectors of $q_h$ obtained from infinitesimal flexes of $(H,q)$. It follows that this velocity on $q_h$ is zero and hence that $u_q$ is zero.
Thus $u=0$ and the nullspace of $R_{\M}(G',(p,q))$ has dimension one, as desired.
\end{proof}

We now arrive at the proof of our main result, Theorem \ref{conetorustheorem}. For the reader's convenience we first re-state the theorem.

\begin{thm}\label{conetorustheorem2}
Let $G=(V,E)$ be a simple graph and let $\M$ be an irreducible  surface
of type $1$. Then a
generic framework $(G,p)$  on $\M$ is isostatic if and only
$G$ is $K_1, K_2, K_3, K_4$ or is $(2,1)$-tight.
\end{thm}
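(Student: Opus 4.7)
The plan is to prove the two directions separately. Necessity is immediate from Theorem \ref{necessity}: if $(G,p)$ is isostatic and generic on $\M$ with $G\notin\{K_1,K_2,K_3,K_4\}$, the theorem with $k=1$ supplies both the edge count $|E|=2|V|-1$ and the sparsity inequality $|E(H)|\leq 2|V(H)|-1$ for every subgraph $H$ with at least one edge, which together are exactly $(2,1)$-tightness.

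For sufficiency the cases $G=K_1,K_2,K_3,K_4$ are dealt with by direct inspection of the rigidity matrix at a generic placement. The main case is when $G$ is simple and $(2,1)$-tight, and here I would argue by induction on $|V(G)|$ using Theorem \ref{t:21characterisation}. That theorem produces a sequence $K_5\setminus e = G_0, G_1, \ldots, G_N = G$ in which each $G_{i+1}$ is obtained from $G_i$ by one of the five admissible moves: Henneberg $1$, Henneberg $2$, vertex-to-$K_4$, vertex-to-$4$-cycle, or edge joining. Each of these moves has already been shown to preserve generic isostaticity on an irreducible surface of type $1$, by Lemma \ref{Hen1}, Lemma \ref{hen2independent} (alternatively Section \ref{sec:algh2}), Corollary \ref{vertex2k4}, Lemma \ref{l:Vertexto4cycle}, and Lemma \ref{edgejoin} respectively. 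For the edge-joining step I would note that in a simple $(2,1)$-tight graph formed by edge-joining two $(2,1)$-tight components $G'$ and $H'$, both components must themselves be simple and $(2,1)$-tight, and $K_5\setminus e$ is the smallest such graph; hence $|V(G')|,|V(H')|\geq 5$, which fulfils the size hypothesis of Lemma \ref{edgejoin}.

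The base case — that a generic framework $(K_5\setminus e, p)$ is isostatic on $\M$ — is where the real verification occurs. My approach would be to deduce it from the behaviour of $K_5$ on $\M$: by Definition \ref{d:surfacetype} and Theorem \ref{isostaticnecessity}, a generic $(K_5,p)$ on a type $1$ surface has $\rank R_\M(K_5,p)=14$, so the $15$ rows of $R_\M(K_5,p)$ admit a one-dimensional space of dependencies. It then suffices to check that this dependency has a nonzero coefficient on every edge row for generic $p$, since deleting any edge row then yields a $14\times 15$ rigidity matrix of full row rank, which gives the required isostaticity of $K_5\setminus e$ via Theorem \ref{isostaticnecessity}. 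The existence of one placement where this full-support property holds can be certified by an explicit calculation on a convenient irreducible type $1$ surface such as the cone or the torus, after which Zariski openness of the full-rank condition on $\M^{|V|}$ propagates the conclusion to every generic framework on every irreducible type $1$ surface.

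The inductive machinery is entirely mechanical once Theorem \ref{t:21characterisation} is in hand, so the hard part will be the base case. Unlike for the plane, sphere, or cylinder, where the base graph is either $K_1$ or $K_2$ and isostaticity is trivial, here we must start from $K_5\setminus e$ because no smaller simple $(2,1)$-tight graph exists. The full-support property of the unique $K_5$ self-stress on $\M$ is a genuine genericity statement: establishing it requires either an explicit computation on a single well-chosen framework or a symmetry argument exploiting the edge-transitivity of $K_5$ together with the irreducibility of $\M$.
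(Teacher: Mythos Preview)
Your overall structure is identical to the paper's: necessity via Theorem \ref{necessity}, sufficiency via Theorem \ref{t:21characterisation} together with Lemmas \ref{Hen1}, \ref{hen2independent}, \ref{l:Vertexto4cycle}, \ref{edgejoin} and Corollary \ref{vertex2k4}. Your remark that both components in an edge-joining step must have at least five vertices (so that Lemma \ref{edgejoin} applies) is correct and makes explicit a point the paper leaves implicit.

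The gap is in your base-case argument. Zariski openness will transfer a full-rank computation at one placement to all generic placements \emph{on the same surface} $\M$, but it cannot carry a calculation on the cone or torus across to an arbitrary irreducible type~$1$ surface: distinct such surfaces are cut out by unrelated irreducible polynomials $m(x,y,z)$, possibly of different degrees, and there is no ambient irreducible variety parametrising ``all type~$1$ surfaces'' on which openness could be invoked. The verification that $K_5\setminus e$ is generically isostatic must therefore be made for the given $\M$, either by an argument uniform in the surface or by a computation tailored to it; the paper itself simply asserts that ``one can check'' this and does not attempt a transfer between surfaces. Relatedly, your appeal to Definition \ref{d:surfacetype} and Theorem \ref{isostaticnecessity} does not by itself yield $\rank R_\M(K_5,p)=14$: the type-$1$ condition guarantees only that \emph{some} complete-graph framework has one-dimensional kernel, not that a generic $K_5$ does, and Theorem \ref{isostaticnecessity} is stated for isostatic frameworks, which $K_5$ (failing the count $2|V|-|E|=1$) is not.
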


\begin{proof} That the underlying graph of an isostatic framework on $\M$ is $(2,1)$-tight or is a small complete graph follows from
Theorem \ref{necessity}.
For the sufficiency direction one can check  that the minimal graph $K_5\backslash e$ in the inductive characterisation of $(2,1)$-tight graphs is isostatic on
$\M$. The sufficiency of $(2,1)$-tightness now follows from Theorem \ref{t:21characterisation} if minimal generic rigidity is preserved by Henneberg 1 and 2 moves, the vertex-to-$K_4$ move, the vertex-to-$4$-cycle move and the edge joining move. This is the content of Lemma \ref{Hen1}, Lemma \ref{hen2independent}, Corollary \ref{vertex2k4},
Lemma \ref{l:Vertexto4cycle} and Lemma \ref{edgejoin}.
\end{proof}

Note that we could also have used Theorem \ref{21}, applying Lemma \ref{Vertexsplitprop}, to prove the theorem.

\section{Extensions}

We finish by noting some further natural considerations for frameworks constrained to surfaces.

The assumption, in Theorem \ref{conetorustheorem}, that $\M$ is irreducible merits two comments. Firstly, it is required to avoid to surfaces composed as unions of surfaces of differing numbers of internal motions. For example if $\M$ was the union of two cylinders with distinct but parallel axes then consideration of Theorem \ref{unioncylinders} instantly shows that simply being $(2,1)$-tight is not the correct characterisation. Secondly, if $\M$ is reducible but each component is irreducible then we do not expect any great difficulty in extending our results. For example we expect that Theorem \ref{conetorustheorem} is true for concentric cones, torii or elliptical cylinders.

The usual two-dimensional torus embedded in $3$-dimensional space has freedom type $1$ and so  isostatic frameworks on this surface are characterised as in the previous theorem.
When the torus is realised in $\bR^4$ one may also consider the \emph{Clifford torus} $\T$, that is, the real algebraic variety and smooth manifold defined by the polynomial equations $x^2+y^2=1$ and $z^2+w^2=1$.
The definition of type (freedom number) given in Definition \ref{d:surfacetype} extends without change to an algebraic surface $\M$ in $\bR^d$.

\begin{defn}\label{d:surfacetype2}
An  embedded manifold $\M$ in $\bR^d$ is of type $k$ if $\dim \ker R_{\M}(K_n,q)\geq k$ for all frameworks $(K_n,p)$ on $\M$, for $n= 2,3, \dots$, and $k$ is the largest such integer.
\end{defn}

In particular the
Clifford torus has freedom type $2$.
The rigidity analysis in this setting
requires us to adapt the definition of the rigidity matrix. The details are
similar to those in Definition \ref{rigiditymatrixdef} with the following changes. There are now $4$ columns per vertex and $2$ rows per vertex
where the rows for vertex $i$ (and corresponding framework point $(x_i,y_i,z_i,w_i)$) are zero except in the $4$-tuple corresponding to
$i$ where the entries in the first row are $x_i,y_i,0,0$ and the second are $0,0,z_i,w_i$.

On the other hand take the product of a circle and an ellipse. This is the algebraic variety $\S$ defined by, say, $x^2+y^2=1$ and $z^2+w^2/2=1$ in
$\bR^4$. $\S$ admits exactly $1$ trivial motion. Adapting the methods 
of the last section would lead to the $(2,2)$-tight and the $(2,1)$-tight characterisations of frameworks on $\T$ and $\S$ respectively.

It is natural to seek a similar characterisation of our main result in the case of frameworks with vertices constrained to an irreducible surface of type $0$.  There are a variety of such surfaces that a characterisation could apply to including an elliptical cone, a mobius strip, a
hyperboloid and a hyperbolic paraboloid. Note that the graphs of rigid frameworks need not be connected in this setting.
As a starting point \cite[Proposition $3.4$]{NOP} gives the necessity of the graph being simple and $(2,0)$-tight and we expect that the rigidity preservation methods in this paper will be useful in deriving a characterisation. However, there are immediate additional complications to establishing sufficient conditions, not least since any simple $(2,0)$-tight graph containing a subgraph isomorphic to $K_5$ has a dependent rigidity matrix on any surface. This can easily be seen by noting that $K_5$ is not $(3,6)$-sparse and hence is dependent in $\bR^3$. We also remark that $(2,0)$-tight graphs may be $4$-regular so additional degree 4 operations seem to be necessary (such as $X$-replacement, see \cite{nix-ros,T&W}). This fact, together with the fact that an inductive scheme would have to avoid creating $K_5$ subgraphs, and the fact that there are many simple $(2,0)$-tight graphs (even on small vertex sets) that cannot be generated using the operations in this paper, with $X$-replacement, all suggest that the analogue of Theorem \ref{t:21characterisation} for simple $(2,0)$-tight graphs, without $K_5$ subgraphs, will be significantly more challenging to establish.

\medskip

{\bf Acknowledgement.}
We would like to thank Bill Jackson, for discussions relating to the Henneberg $2$ move on manifolds.

\end{document}